\documentclass[10pt]{article}
\usepackage{amsmath,amssymb,amsthm}
\usepackage{geometry}
\usepackage[numbers,sort&compress]{natbib}
\usepackage{hyperref}
\usepackage{enumerate}
\usepackage{graphicx}
\usepackage{fix-cm}
\usepackage{multicol}

\usepackage{soul} 

\usepackage{amsmath}
\usepackage{amsthm}
\usepackage{amssymb}
\begin{document}
\title{Some properties of sets, functions, and multi-objective optimization problems using p-convexity}

%
\author{Cristian Vera Donoso\footnote{Instituto de Ciencias Exactas y Naturales, Facultad de
Ciencias, Universidad Arturo
Prat, Iquique-Chile}}
\maketitle


\begin{abstract}
In this paper, we investigate the concept of $p$-convexity for sets and functions in
$n$-dimensional Euclidean space. We establish novel algebraic and topological results
within this generalized convexity framework. Furthermore, we analyze multi-objective
optimization problems, with a particular emphasis on weakly efficient minima, under
the assumption of $p$-convexity of the component functions. Several characterizations
and properties of the corresponding solution sets are derived.
\medskip

\noindent{\small \emph{Keywords}: p-convex sets and functions; generalized convexity; multiobjective optimization;
weak efficiency}


\noindent{\small \emph{2010 Mathematics Subject Classification: }  90C26; 90C29; 26B25}
\end{abstract}
\theoremstyle{plain}
\newtheorem{theorem}{Theorem}[section]
\newtheorem{lemma}[theorem]{Lemma}
\newtheorem{corollary}[theorem]{Corollary}
\newtheorem{proposition}[theorem]{Proposition}

\theoremstyle{definition}
\newtheorem{definition}[theorem]{Definition}
\newtheorem{example}[theorem]{Example}

\theoremstyle{remark}
\newtheorem{remark}{Remark}
\newtheorem{notation}{Notation}
\newcommand{\R}{\mathbb{R}}
\newcommand{\N}{\mathbb{N}}

\section{Introduction}

Convexity constitutes one of the central pillars of optimization theory, underpinning fundamental results on existence, stability, and optimality of solutions. Nevertheless, in many optimization models—particularly those involving nonstandard geometries, nonlinear aggregation, or vector-valued objectives—the classical notion of convexity may be too restrictive. This has motivated the development of several generalized convexity concepts, which have proven to be effective tools in extending optimization techniques beyond the linear framework; see, for instance, \cite{D1,D2}.

Among these generalizations, \emph{$p$-convexity}, with $0<p\leq 1$, has attracted increasing attention due to its natural connection with $p$-normed and quasi-Banach spaces. Originally introduced in the context of functional analysis \cite{A,BF,BP,XZ}, $p$-convexity replaces linear convex combinations with nonlinear ones adapted to the parameter $p$, leading to sets and functions that are generally nonconvex in the classical sense while preserving a meaningful geometric structure. Several fundamental properties of $p$-convex sets have been investigated, including Krein--Milman type results and structural characterizations in finite- and infinite-dimensional spaces \cite{GL,GKR}.

The notion of $p$-convexity has also been extended to the functional setting. In this direction, $p$-convex functions have been defined via $p$-convex epigraphs, providing a natural generalization of convex functions and Jensen-type inequalities \cite{SETA}. Further analytical properties and inequalities for this class of functions have been studied in recent works, highlighting both analogies and essential differences with the classical convex case when $0<p<1$ \cite{KSTA,EKT}. Despite these contributions, several algebraic and topological aspects of $p$-convex sets and functions in Euclidean spaces remain insufficiently explored, particularly from the perspective of optimization theory.

On the other hand, multi-objective optimization problems arise naturally in numerous applications where several conflicting criteria must be optimized simultaneously. Weakly efficient solutions constitute a widely accepted solution concept in this setting, offering a flexible notion of optimality compatible with vector-valued objective functions. The existence and characterization of weakly efficient solutions have been extensively studied under various generalized convexity assumptions; see, for example, \cite{BBP,D1,D2,D3,FV1,FV2,FV4,FHV,FLV}. However, to the best of our knowledge, no systematic analysis of weak efficiency has been carried out under $p$-convexity assumptions, either for the feasible set or for the objective functions.

The aim of this paper is twofold. First, we establish new algebraic and topological properties of $p$-convex sets and $p$-convex functions in $\mathbb{R}^n$, with particular attention to operations and constructions that are relevant in optimization, such as closures, interiors, and stability under basic set operations. Second, we introduce and study multi-objective optimization problems whose objective functions are vector-valued $p$-convex mappings. For this class of problems, we analyze the structure of the corresponding sets of weakly efficient solutions and derive several characterizations that extend known results from classical convex optimization to the $p$-convex framework.

The remainder of the paper is organized as follows. Section~2 presents the necessary preliminaries and notation. Sections~3 and~4 are devoted to the study of $p$-convex sets and functions. In Section~5, we investigate multi-objective optimization problems under $p$-convexity assumptions and discuss properties of weakly efficient solution sets.

\section{Basic notations}
\label{sec:2}
We denote the closed and open $q$-norm balls centered at $\bar{x} \in \mathbb{R}^n$ with radius $\delta > 0$ by
\[
\mathbb{B}_{q}[\bar{x},\delta] := \{x \in \mathbb{R}^n : \|x - \bar{x}\|_q \leq \delta\}, \quad
\mathbb{B}_{q}(\bar{x},\delta) := \{x \in \mathbb{R}^n : \|x - \bar{x}\|_q < \delta\},
\]
where $1 \leq q \leq +\infty$, $\|x\|_q = (|x_1|^q + \cdots + |x_n|^q)^{1/q}$, and $\|x\|_\infty = \max\{|x_1|, \dots, |x_n|\}$. 

For a set $K \subseteq \mathbb{R}^n$ and a point $x_0 \in \mathbb{R}^n$, the distance from $x_0$ to $K$ is defined as
\[
d_{q}(x_0, K) := \inf\{\|k - x_0\|_q : k \in K\}.
\]

Let $I := \{1, \dots, n\}$ be an index set, and let 
\[
\mathbb{R}^n_+ := \{x \in \mathbb{R}^n : x_i \ge 0, \ \forall i \in I\}
\]
denote the first orthant. 

For $0 < p \leq 1$, we define the following $p$-convex combinations of two points $x, y \in \mathbb{R}^n$:
\[
(x, y)_p := \{\lambda x + (1 - \lambda^p)^{\frac{1}{p}} y : 0 < \lambda < 1\}, \quad
[x, y)_p := \{\lambda x + (1 - \lambda^p)^{\frac{1}{p}} y : 0 < \lambda \le 1\},\]
\[ [x, y]_p := \{\lambda x + (1 - \lambda^p)^{\frac{1}{p}} y : 0 \le \lambda \le 1\}.
\]

Finally, $\mathrm{int}\, K$ denotes the topological interior of $K$, and $\overline{K}$ denotes its topological closure.

 \section{P-Convex set}
\begin{definition}[\cite{A,SETA}] 
Let $K \subseteq \mathbb{R}^n$ and let $0 < p \leq 1$. The set $K$ is called \emph{p-convex} if, for every $x, y \in K$ and $\lambda, \mu \geq 0$ satisfying $\lambda^p + \mu^p = 1$, we have
\[
\lambda x + \mu y \in K.
\]

Equivalently, the definition can be expressed in either of the following forms:
\[
\lambda x + (1 - \lambda^p)^{\frac{1}{p}} y \in K, \quad \forall x, y \in K, \ \forall \lambda \in [0,1],
\]
or
\[
(1 - t)^{\frac{1}{p}} x + t^{\frac{1}{p}} y \in K, \quad \forall x, y \in K, \ \forall t \in [0,1].
\]

\end{definition}

\begin{remark} Let $0<p<1$ and let $x \in \mathbb{R}^n$. The singleton set $\{x\}$ is $p$-convex if and only if $x=0$.
\end{remark}

\begin{proposition}
Let $K \subset \mathbb{R}$ be an interval of one of the following types:
\[
(a,+\infty),\ [a,+\infty),\ (-\infty,b),\ (-\infty,b],\ (a,b),\ (a,b],\ [a,b),\ [a,b],
\]
where $a \le 0 < b$. Then $K$ is a $p$-convex set for any $0<p\le1$.
\end{proposition}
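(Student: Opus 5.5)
The plan is to reduce everything to one elementary inequality about the coefficients. For $\lambda,\mu\ge 0$ with $\lambda^p+\mu^p=1$ and $0<p\le 1$, both coefficients lie in $[0,1]$, so $\lambda\le\lambda^p$ and $\mu\le\mu^p$. Adding these gives
\[
0\le \lambda+\mu\le \lambda^p+\mu^p=1 .
\]
Consequently, for $x,y$ in $K$, the point $z=\lambda x+\mu y$ is a nonnegative combination of $x$ and $y$ whose total weight $s:=\lambda+\mu$ lies in $[0,1]$. If $s>0$, we can write $z=s\,w$ with $w=\frac{\lambda}{s}x+\frac{\mu}{s}y$. This $w$ is an ordinary convex combination, so $w$ lies between $x$ and $y$ and hence belongs to $K$, since every listed $K$ is an interval and therefore classically convex. (In fact $s\ge 2^{1-1/p}>0$, but this is not needed: if $s=0$ then $z=0$.)

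The second step is to show that $z=s\,w$, with $s\in[0,1]$ and $w\in K$, belongs to $K$. This uses the hypothesis $a\le 0<b$: each listed interval contains $0$, either because $a<0<b$, or because $a=0$ and the left endpoint is included. The case $a=0$ with the left endpoint excluded is the only delicate one and is handled separately below. When $0\in K$, the point $s w$ lies on the segment between $0$ and $w$, so by classical convexity of the interval it lies in $K$.

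The main obstacle is the case where $0\notin K$, namely $a=0$ with an open left end, i.e.\ $K=(0,b)$, $(0,b]$ or $(0,+\infty)$. Here one cannot shrink toward $0$ freely, so I argue directly instead. For $x,y>0$ and $(\lambda,\mu)\ne(0,0)$ we have $z=\lambda x+\mu y>0$, since $\lambda^p+\mu^p=1$ prevents both coefficients from vanishing. For the upper bound, $z=s w\le w<b$ (or $\le b$ for a closed right end) because $w>0$ and $s\le1$. This settles the lower bound $z>0$ and the upper bound together.

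A similar two-sided check also covers the general case directly, and I will write it that way. The lower bound is $z\ge s\cdot\min(x,y)\ge \min(x,y,0)$, which is $\ge a$ (strictly when the endpoint is excluded), because multiplying a number in $[\min(x,y),0]$ or $[0,\max]$ by $s\in[0,1]$ moves it toward $0$. The upper bound is symmetric. Each of the eight interval types then follows by inspecting the endpoints.
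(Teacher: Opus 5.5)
Your proof is correct, but your main argument is organized differently from the paper's. Both rest on the same inequality, $\lambda+\mu\le\lambda^p+\mu^p=1$.

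\textbf{The paper's route.} It uses the single chain $a=a(\lambda^p+\mu^p)\le a(\lambda+\mu)<\lambda x+\mu y<b(\lambda+\mu)\le b(\lambda^p+\mu^p)=b$. Because it compares $x,y$ with the endpoint $a$ first and only then uses $\lambda+\mu\le 1$ and $a\le 0$, strictness comes from $x,y>a$ and $(\lambda,\mu)\ne(0,0)$. This covers all eight interval types at once, including $a=0$ excluded, with no case split.

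\textbf{Your route.} You factor $\lambda x+\mu y=s\,w$, where $w$ is an ordinary convex combination and $s=\lambda+\mu\in(0,1]$. This isolates the structural reason: a $p$-convex combination is a classical convex combination pulled toward the origin. Hence every convex set in $\mathbb{R}^n$ containing $0$ is $p$-convex, which is more general than the interval statement. The cost is a separate treatment of $(0,b)$, $(0,b]$ and $(0,+\infty)$, where $0\notin K$. You handle that case correctly.

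\textbf{A slip in the unified version.} The chain $z\ge s\min(x,y)\ge\min(x,y,0)\ge a$ is not strict when $a=0$ is excluded, since then $\min(x,y,0)=0=a$. So the claim ``strictly when the endpoint is excluded'' fails exactly in your delicate case. To fix this, do one of the following:
\begin{itemize}
\item keep your separate argument for that case;
\item stop at $z\ge s\min(x,y)>0$, which holds because $s>0$ and $\min(x,y)>0$;
\item use the paper's ordering $z>a(\lambda+\mu)\ge a$, which is strict in every case.
\end{itemize}
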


\begin{proof}
We only consider the case $K=(a,b)$, since the remaining cases follow analogously.
Let $x,y\in(a,b)$ and let $\lambda,\mu\ge0$ be such that $\lambda^p+\mu^p=1$.
Since $0<p\le1$, it holds that $\lambda+\mu\le1$.

Using $a\le0<b$, we obtain
\[
a = a(\lambda^p+\mu^p) \le a(\lambda+\mu)
   < \lambda x + \mu y
   < b(\lambda+\mu) \le b(\lambda^p+\mu^p) = b.
\]
Therefore, $\lambda x+\mu y\in(a,b)$, which proves that $K$ is $p$-convex.
\end{proof}
\begin{proposition}
Let $0<p\le 1$ and let $\mathbb{B}_{q}(\bar{x},\delta)$ (resp. $\mathbb{B}[\bar{x},\delta]_{q}$).  
If $0\in \mathbb{B}_{q}(\bar{x},\delta)$ (resp. $0\in \mathbb{B}_{q}[\bar{x},\delta]$), then $\mathbb{B}(\bar{x},\delta)_{q}$ (resp. $\mathbb{B}_{q}[\bar{x},\delta]$) is a $p$-convex set.
\label{ejemplo bola centra origen}
\end{proposition}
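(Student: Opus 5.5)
The plan is to reduce the statement to two elementary facts: the ball is convex, and it is star-shaped with respect to the origin. Let $B$ denote either $\mathbb{B}_{q}(\bar{x},\delta)$ or $\mathbb{B}_{q}[\bar{x},\delta]$, and assume $0\in B$. Take $x,y\in B$ and $\lambda,\mu\ge 0$ with $\lambda^p+\mu^p=1$. We must show $\lambda x+\mu y\in B$.

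\emph{Step 1 (the weights sum to at most one).} Exactly as in the proof of the preceding proposition on intervals, $0<p\le 1$ and $\lambda,\mu\in[0,1]$ give $\lambda\le\lambda^p$ and $\mu\le\mu^p$. Hence $s:=\lambda+\mu\le 1$. If $s=0$, then $\lambda x+\mu y=0\in B$ and we are done, so assume $s>0$.

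\emph{Step 2 (factor through a convex combination).} Write
\[
\lambda x+\mu y = s\,z + (1-s)\,0, \qquad z:=\tfrac{\lambda}{s}x+\tfrac{\mu}{s}y .
\]
The point $z$ is an ordinary convex combination of $x$ and $y$. Since $\|\cdot\|_q$ is a norm for $1\le q\le\infty$, the triangle inequality gives
\[
\|z-\bar x\|_q\le \tfrac{\lambda}{s}\|x-\bar x\|_q+\tfrac{\mu}{s}\|y-\bar x\|_q .
\]
So $z\in B$, with strict inequality preserved in the open case.

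\emph{Step 3 (convex combination with the origin).} The point $s z+(1-s)0$ is again an ordinary convex combination, now of $z\in B$ and $0\in B$. By the same triangle-inequality estimate,
\[
\|s z-\bar x\|_q\le s\|z-\bar x\|_q+(1-s)\|\bar x\|_q .
\]
The right-hand side is $<\delta$ (resp. $\le\delta$), because both $\|z-\bar x\|_q$ and $\|0-\bar x\|_q$ are. This proves $\lambda x+\mu y\in B$.

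There is no real obstacle here. The only point requiring care is the open-ball case, where strict inequality has to survive each convex-combination step. It does, because the weights are nonnegative and sum to $1$, and each norm term is strictly less than $\delta$. Note also that the hypothesis $0\in B$ is exactly what Step 3 needs. Without it the statement fails, consistent with the earlier remark on singletons.
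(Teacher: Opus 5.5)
Your proof is correct and is essentially the paper's argument: the paper writes $\lambda x+\mu y-\bar{x}=\lambda(x-\bar{x})+\mu(y-\bar{x})+(\lambda+\mu-1)\bar{x}$ and applies the triangle inequality once, using $\lambda+\mu\le 1$ and $\|\bar{x}\|_q<\delta$. Your two convex-combination steps (first through $z$, then with the origin) compose to exactly the same estimate $\|\lambda x+\mu y-\bar{x}\|_q\le\lambda\|x-\bar{x}\|_q+\mu\|y-\bar{x}\|_q+(1-\lambda-\mu)\|\bar{x}\|_q$.
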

\begin{proof}
We prove the result for the open ball; the closed case is analogous.

Let $x,y \in \mathbb{B}_{q}(\bar{x},\delta)$ and let $\lambda,\mu \ge 0$ satisfy $\lambda^p+\mu^p=1$.  
Since $0\in \mathbb{B}(\bar{x},\delta)$, we have $\|\bar{x}\|_q < \delta$.

We write
\[
\lambda x + \mu y - \bar{x}
= \lambda(x-\bar{x}) + \mu(y-\bar{x}) + (\lambda+\mu-1)\bar{x}.
\]
Using the triangle inequality, we obtain
\[
\|\lambda x + \mu y - \bar{x}\|_q
\le \lambda\|x-\bar{x}\|_q + \mu\|y-\bar{x}\|_q + (1-(\lambda+\mu))\|\bar{x}\|_q.
\]

Since $x,y \in \mathbb{B}_{q}(\bar{x},\delta)$, $\|\bar{x}\|_q<\delta$ and $\lambda+\mu \leq 1$ , it follows that
\[
\|\lambda x + \mu y - \bar{x}\|_q
< \lambda\delta + \mu\delta + (1-(\lambda+\mu))\delta
= \delta
,\]
 so
\[
\|\lambda x + \mu y - \bar{x}\|_q < \delta.
\]

Therefore,
\[
\lambda x + \mu y \in \mathbb{B}_{q}(\bar{x},\delta),
\]
which proves that $\mathbb{B}_{q}(\bar{x},\delta)$ is $p$-convex.
\end{proof}

\begin{proposition}
If $\bar{x} \neq 0$, then the ball $\mathbb{B}_{q}(\bar{x}, \|\bar{x}\|_{q})$ is a $p$-convex set for every $0 < p \leq 1$.
\label{ejemplo bola no origen}
\end{proposition}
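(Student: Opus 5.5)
The argument follows the proof of Proposition~\ref{ejemplo bola centra origen}, with the strict margin there replaced by a strict inequality coming from the two points of the ball. Note first that $0\notin\mathbb{B}_{q}(\bar{x},\|\bar{x}\|_q)$, since its distance to the centre is exactly the radius. So Proposition~\ref{ejemplo bola centra origen} cannot be applied directly, but its decomposition still works. Take $x,y\in\mathbb{B}_{q}(\bar{x},\|\bar{x}\|_q)$ and $\lambda,\mu\ge 0$ with $\lambda^p+\mu^p=1$, and write
\[
\lambda x+\mu y-\bar{x}=\lambda(x-\bar{x})+\mu(y-\bar{x})-(1-\lambda-\mu)\bar{x}.
\]
Since $0<p\le 1$ and $\lambda,\mu\in[0,1]$, we have $\lambda\le\lambda^p$ and $\mu\le\mu^p$, hence $1-\lambda-\mu\ge 0$.

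Applying the triangle inequality gives
\[
\|\lambda x+\mu y-\bar{x}\|_q\le \lambda\|x-\bar{x}\|_q+\mu\|y-\bar{x}\|_q+(1-\lambda-\mu)\|\bar{x}\|_q .
\]
We have $\|x-\bar{x}\|_q<\|\bar{x}\|_q$ and $\|y-\bar{x}\|_q<\|\bar{x}\|_q$, and $\lambda,\mu$ are not both zero because $\lambda^p+\mu^p=1$. Hence at least one of the first two terms is strictly smaller than its counterpart with $\|\bar{x}\|_q$ in place of the distance. Therefore
\[
\|\lambda x+\mu y-\bar{x}\|_q<(\lambda+\mu)\|\bar{x}\|_q+(1-\lambda-\mu)\|\bar{x}\|_q=\|\bar{x}\|_q .
\]
This shows $\lambda x+\mu y\in\mathbb{B}_{q}(\bar{x},\|\bar{x}\|_q)$, which is the $p$-convexity claimed.

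The only delicate point is securing strictness, since the third term gives no slack. I would handle it exactly as above: the positivity of $\lambda$ or $\mu$ lets one strict inequality carry through. Summing the bounds termwise with nonnegative coefficients, and at least one positive coefficient attached to a strict bound, yields the strict conclusion. The hypothesis $\bar{x}\neq 0$ only ensures that the ball is nonempty, so that the statement is not vacuous.
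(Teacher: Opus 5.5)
Your proof is correct and is essentially the paper's argument: the paper only says that the proof of Proposition~\ref{ejemplo bola centra origen} carries over, and you carry it out with the same decomposition and triangle inequality. You also correctly observe that strictness must now come from whichever of $\lambda,\mu$ is positive, since $\|\bar{x}\|_q$ equals the radius rather than being strictly smaller; this is exactly the detail the paper leaves implicit.
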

\begin{proof}
The proof follows the same arguments as those used in Proposition~\ref{ejemplo bola centra origen}.
\end{proof}

The following proposition shows that not every ball in $\mathbb{R}^{n}$ is $p$-convex. This highlights a significant difference from classical convexity.
\begin{proposition}
Let $\bar{x} \neq 0$, and let $\beta, \delta > 0$ satisfy $\beta \geq 1$ and 
\[
\frac{\beta \delta}{\|\bar{x}\|_{q}} \leq \frac{1}{2}.
\]
Then the ball $\mathbb{B}_{q}(\bar{x}, \delta)$ is not a $p$-convex set for any $p$ with $0 < p < \frac{1}{2}$.
\label{ejemplo no p convexa}
\end{proposition}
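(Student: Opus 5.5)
The plan is to exhibit a single explicit violation of the definition of $p$-convexity, using the same point twice. Since $\bar{x} \in \mathbb{B}_{q}(\bar{x},\delta)$ trivially, we may take $x = y = \bar{x}$. For the weights we choose the symmetric pair $\lambda = \mu = 2^{-1/p}$, which satisfies $\lambda^p + \mu^p = 1$. The corresponding $p$-convex combination is
\[
\lambda x + \mu y = (\lambda+\mu)\bar{x} = 2^{1-\frac{1}{p}}\,\bar{x}.
\]
It then suffices to show that this point lies outside the ball.

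The key step is to compute the distance of this point to the centre:
\[
\bigl\|2^{1-\frac{1}{p}}\bar{x} - \bar{x}\bigr\|_q = \bigl(1 - 2^{1-\frac{1}{p}}\bigr)\|\bar{x}\|_q .
\]
For $0<p<\frac{1}{2}$ we have $\frac{1}{p} > 2$, so $1-\frac{1}{p} < -1$ and hence $2^{1-\frac{1}{p}} < \frac{1}{2}$. Consequently $1 - 2^{1-\frac{1}{p}} > \frac{1}{2}$. Because $\bar{x}\neq 0$, this gives the strict inequality
\[
\bigl\|2^{1-\frac{1}{p}}\bar{x} - \bar{x}\bigr\|_q > \tfrac{1}{2}\|\bar{x}\|_q .
\]

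Finally we invoke the hypothesis. The inequality $\frac{\beta\delta}{\|\bar{x}\|_q}\le \frac12$ together with $\beta\ge 1$ yields
\[
\delta \le \beta\delta \le \tfrac{1}{2}\|\bar{x}\|_q .
\]
Hence the distance computed above is strictly greater than $\delta$, so $\lambda x+\mu y \notin \mathbb{B}_{q}(\bar{x},\delta)$ and the ball is not $p$-convex.

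There is no real obstacle in this argument. The only points requiring care are the strictness of the inequality $2^{1-1/p}<\frac12$, which is exactly where $p<\frac12$ is used, and the observation that the constant $\beta$ plays no essential role beyond giving $\delta\le\|\bar{x}\|_q/2$.
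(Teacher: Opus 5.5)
Your proof is correct and uses the same mechanism as the paper: take $x=y=z$ in the ball with $\lambda=\mu=2^{-1/p}$, so the combination is $2^{1-1/p}z$, and show this point lies outside $\mathbb{B}_{q}(\bar{x},\delta)$. The only difference is the test point. You use the centre $z=\bar{x}$, which reduces everything to the single estimate $(1-2^{1-1/p})\|\bar{x}\|_q>\frac{1}{2}\|\bar{x}\|_q\ge\delta$. The paper instead uses $z=\bigl(1-\frac{\delta}{\|\bar{x}\|_q}+\varepsilon\bigr)\bar{x}$, near the boundary point closest to the origin; with $\varepsilon$ small, that choice would actually work for every $0<p<1$ and every $\delta<\|\bar{x}\|_q$, although the paper does not exploit this.
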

\begin{proof}
Let 
\[
z := \left(1 - \frac{\delta}{\|\bar{x}\|_{q}} + \varepsilon \right)\bar{x},
\quad \text{with } \varepsilon \in \left(0, \frac{\delta}{\|\bar{x}\|_{q}}\right).
\]
Then
\[
\|z - \bar{x}\|_{q}
= \left\|\left(\varepsilon - \frac{\delta}{\|\bar{x}\|_{q}}\right)\bar{x}\right\|_{q}
= \delta - \varepsilon \|\bar{x}\|_{q}
< \delta,
\]
which implies that $z \in \mathbb{B}_{q}(\bar{x}, \delta)$.

On the other hand, by the hypothesis,
\[
1 - \frac{\delta}{\|\bar{x}\|_{q}} + \varepsilon
< 2\left(1 - \frac{\beta \delta}{\|\bar{x}\|_{q}}\right)
< 2^{\frac{1}{p}-1}\left(1 - \frac{\beta \delta}{\|\bar{x}\|_{q}}\right).
\]
Therefore,
\[
1 - 2^{1-\frac{1}{p}}\left(1 - \frac{\delta}{\|\bar{x}\|_{q}} + \varepsilon\right)
> \frac{\beta \delta}{\|\bar{x}\|_{q}},
\]
which is equivalent to
\[
\left(1 - 2^{1-\frac{1}{p}}\left(1 - \frac{\delta}{\|\bar{x}\|_{q}} + \varepsilon\right)\right)\|\bar{x}\|_{q}
> \beta \delta.
\]
Consequently,
\[
\left\|2^{1-\frac{1}{p}}\left(1 - \frac{\delta}{\|\bar{x}\|_{q}} + \varepsilon\right)\bar{x} - \bar{x}\right\|_{q}
> \beta \delta
\geq \delta,
\]
that is,
\[
\|2^{1-\frac{1}{p}} z - \bar{x}\|_{q} > \delta.
\]
Hence,
\[
2^{1-\frac{1}{p}} z
= \frac{z}{2^{\frac{1}{p}}} + \frac{z}{2^{\frac{1}{p}}}
\notin \mathbb{B}_{q}(\bar{x}, \delta),
\]
which shows that $\mathbb{B}_{q}(\bar{x}, \delta)$ is not a $p$-convex set.
\end{proof}

\begin{proposition}
Let $K \subseteq \mathbb{R}^{n}$ be a set such that $\alpha K \subseteq K$ for all $0 < \alpha \leq 1$.  
Then the following statements are equivalent:
\begin{itemize}
\item[(a)] $K + K \subseteq K$.
\item[(b)] $K$ is a cone and a $p$-convex set for some $0 < p \leq 1$.
\end{itemize}
\label{cono convexo}
\end{proposition}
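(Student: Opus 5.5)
The plan is to prove the two implications directly, using only the definition of $p$-convexity and the scaling hypothesis $\alpha K\subseteq K$ for $0<\alpha\le 1$. Throughout, I take ``cone'' to mean $\alpha K\subseteq K$ for every $\alpha>0$. I will not need $0\in K$; if the paper's convention includes $\alpha=0$, the case $K=\emptyset$ is trivial and the argument adapts.

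\emph{(a)$\Rightarrow$(b).} First I show that $K$ is a cone. Let $x\in K$ and $\alpha>0$. If $\alpha\le 1$, then $\alpha x\in K$ by hypothesis. If $\alpha>1$, let $m:=\lceil \alpha\rceil\in\mathbb{N}$. Applying (a) repeatedly gives $mx=x+\cdots+x\in K$. Since $\beta:=\alpha/m\in(0,1]$, the hypothesis then gives $\alpha x=\beta(mx)\in K$.

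Next I check $p$-convexity, in fact for every $0<p\le 1$. Let $x,y\in K$ and $\lambda,\mu\ge 0$ with $\lambda^p+\mu^p=1$. If $\lambda=0$ then $\mu=1$ and $\lambda x+\mu y=y\in K$; the case $\mu=0$ is symmetric. If $\lambda,\mu>0$, then both lie in $(0,1]$, so $\lambda x\in K$ and $\mu y\in K$ by hypothesis. Then (a) gives $\lambda x+\mu y\in K$.

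\emph{(b)$\Rightarrow$(a).} Fix $p$ such that $K$ is a $p$-convex cone, and let $x,y\in K$. Take $\lambda=\mu=2^{-1/p}$, which satisfy $\lambda^p+\mu^p=1$. By $p$-convexity, $2^{-1/p}(x+y)\in K$. Since $K$ is a cone, scaling by $2^{1/p}>0$ gives $x+y\in K$.

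There is no real obstacle here. The only delicate points are the degenerate coefficients $\lambda=0$ or $\mu=0$ in the first implication, and fixing the convention for ``cone''. The one substantive step is extending scaling beyond $1$ via integer multiples. This is exactly where the hypothesis $\alpha K\subseteq K$ for $0<\alpha\le 1$ combines with (a).
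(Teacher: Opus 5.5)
Your proof is correct and follows the paper's argument essentially step for step. In (a)$\Rightarrow$(b) you use the scaling hypothesis to put $\lambda x,\mu y$ in $K$ and then add them, and you extend scaling beyond $1$ via integer multiples; in (b)$\Rightarrow$(a) you take $\lambda=\mu=2^{-1/p}$ exactly as the paper does. Your separate treatment of $\lambda=0$ or $\mu=0$ is a small improvement, since the paper's claim that $\lambda x\in K$ is not covered by the hypothesis when $\lambda=0$.
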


\begin{proof}
\noindent\textbf{(a) $\Rightarrow$ (b).}
Let $x,y \in K$ and let $\lambda,\mu \geq 0$ be such that $\lambda^{p} + \mu^{p} = 1$.  
Since $\lambda, \mu \leq 1$ and $\alpha K \subseteq K$ for all $0<\alpha\leq 1$, it follows that
$\lambda x, \mu y \in K$. Hence,
\[
\lambda x + \mu y \in K + K \subseteq K,
\]
which shows that $K$ is a $p$-convex set.

Next, let $x \in K$ and $\lambda > 0$. From the assumption $K+K \subseteq K$, it follows by induction that
\[
nK \subseteq K \quad \text{for all } n \in \mathbb{N}.
\]
Choose $n_{0} \in \mathbb{N}$ such that $\lambda / n_{0} \leq 1$. Then,
\[
\frac{\lambda}{n_{0}} x \in K,
\]
and consequently,
\[
\lambda x = n_{0}\left(\frac{\lambda}{n_{0}} x\right) \in n_{0}K \subseteq K.
\]
Therefore, $K$ is a cone.

\medskip
\noindent\textbf{(b) $\Rightarrow$ (a).}
Assume that $K$ is a cone and a $p$-convex set. Let $x,y \in K$.  
Since
\[
\left(\frac{1}{2^{1/p}}\right)^{p} + \left(\frac{1}{2^{1/p}}\right)^{p} = 1,
\]
by $p$-convexity we have
\[
\frac{1}{2^{1/p}}x + \frac{1}{2^{1/p}}y \in K.
\]
Using that $K$ is a cone, we multiply by $2^{1/p}$ to obtain
\[
x + y \in K.
\]
Thus, $K + K \subseteq K$, which completes the proof.
\end{proof}

 \begin{theorem}\label{multiple-p-convexity}
Let $K \subseteq \mathbb{R}^{n}$ be a $p$-convex set with $0 < p \leq 1$ and suppose that $0 \in K$.
Then $K$ is also $p_{1}$-convex for every $0 < p_{1} \leq p$.
\end{theorem}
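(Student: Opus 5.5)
The plan is to reduce $p_1$-convexity to $p$-convexity combined with a shrinking property that follows from $0\in K$. Take $x,y\in K$ and $\lambda,\mu\ge 0$ with $\lambda^{p_1}+\mu^{p_1}=1$. The goal is to write
\[
\lambda x+\mu y = \lambda' x + \mu' y
\]
with $\lambda'\ge\lambda$ and $\mu'\ge\mu$, where $(\lambda')^p+(\mu')^p=1$. Then we pass from $\lambda' x+\mu'y$ to $\lambda x+\mu y$ by scaling each summand toward the origin.

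\textbf{Step 1: $K$ is star-shaped at the origin.} First we show that $\alpha z\in K$ for all $z\in K$ and $\alpha\in[0,1]$. Since $0\in K$, we can apply $p$-convexity to the pair $(z,0)$ with $\lambda=\alpha$ and $\mu=(1-\alpha^p)^{1/p}$. This gives
\[
\alpha z+\mu\cdot 0=\alpha z\in K .
\]

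\textbf{Step 2: the scalar inequality.} Since $p_1\le p$ and $\lambda,\mu\in[0,1]$, we have $\lambda^{p}\le\lambda^{p_1}$ and $\mu^p\le\mu^{p_1}$. Hence $s:=\lambda^p+\mu^p\le 1$. If $s=0$ then $\lambda=\mu=0$, which is impossible, so $s>0$. Set $\lambda'=\lambda/s^{1/p}$ and $\mu'=\mu/s^{1/p}$. Then $(\lambda')^p+(\mu')^p=1$, and $p$-convexity gives $w:=\lambda'x+\mu'y\in K$.

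\textbf{Step 3: conclusion.} We have $\lambda x+\mu y=s^{1/p}w$ with $s^{1/p}\in(0,1]$. By Step 1 this point lies in $K$, so $K$ is $p_1$-convex.

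This is a single-scaling trick, and it avoids needing to scale $x$ and $y$ by different factors (which would require an extra $p$-convex combination). The only real point to check is that one uniform scaling suffices, and it does because $\lambda x+\mu y$ is exactly a positive multiple at most one of a genuine $p$-convex combination. I expect no serious obstacle; the step requiring the most care is Step 2, namely the monotonicity $t^{p}\le t^{p_1}$ for $t\in[0,1]$ and the fact that $s\le 1$.
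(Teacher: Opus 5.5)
Your proof is correct and is essentially the paper's argument. The paper also uses star-shapedness from $0\in K$ and the same normalizing factor $\alpha=(\lambda^p+\mu^p)^{1/p}\le 1$; the only difference is that it scales $x,y$ by $\alpha$ first and then forms the $p$-convex combination $\frac{\lambda}{\alpha}(\alpha x)+\frac{\mu}{\alpha}(\alpha y)$, whereas you take the combination first and scale it afterward.
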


\begin{proof}
First, we show that $K$ is star-shaped with respect to the origin.
Let $x \in K$ and $\lambda \in [0,1]$. Since $0 \in K$ and by the $p$-convexity of $K$ we obtain
\[
\lambda x
= \lambda x + (1-\lambda^{p})^{1/p} \, 0 \in K.
\]
Hence, $\lambda x \in K$ for all $\lambda \in [0,1]$ and all $x \in K$.

Now let $x,y \in K$ and let $\lambda,\mu \geq 0$ satisfy
\[
\lambda^{p_{1}} + \mu^{p_{1}} = 1,
\qquad 0 < p_{1} \leq p.
\]
Then,
\[
\lambda^{p} + \mu^{p} \leq \lambda^{p_{1}} + \mu^{p_{1}} = 1.
\]
Define
\[
\alpha := (\lambda^{p} + \mu^{p})^{1/p} \in (0,1].
\]
Then
\[
\lambda x + \mu y
= \frac{\lambda}{\alpha} \, (\alpha x) + \frac{\mu}{\alpha} \, (\alpha y).
\]
Since $\alpha \leq 1$ and $K$ is star-shaped, we have $\alpha x, \alpha y \in K$.
Moreover,
\[
\left(\frac{\lambda}{\alpha}\right)^{p} + \left(\frac{\mu}{\alpha}\right)^{p}
= \frac{\lambda^{p} + \mu^{p}}{\alpha^{p}} = 1.
\]
Therefore, by the $p$-convexity of $K$,
\[
\lambda x + \mu y \in K.
\]
This proves that $K$ is $p_{1}$-convex.
\end{proof}

\begin{theorem}[\cite{A,SETA}]
Let $\{K_{i}\}_{i\in I}$ be a family of p-convex sets. Then, the intersection $\displaystyle \bigcap_{i\in I}K_{i}$ is also
a p-convex set.
\label{interseccion de p convexos}
\end{theorem}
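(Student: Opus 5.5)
The proof will go directly from the definition of $p$-convexity, in the same way as the classical argument for intersections of convex sets. Set $K := \bigcap_{i\in I}K_{i}$. Take arbitrary $x,y\in K$ and scalars $\lambda,\mu\geq 0$ with $\lambda^{p}+\mu^{p}=1$. The goal is to show that $\lambda x+\mu y\in K$.

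The key steps are as follows. First, since $x,y\in K$, the definition of intersection gives $x,y\in K_{i}$ for every index $i\in I$. Second, fix an arbitrary $i\in I$. Each $K_{i}$ is $p$-convex for the same fixed $p$, and the pair $(\lambda,\mu)$ satisfies $\lambda^{p}+\mu^{p}=1$. So the definition of $p$-convexity applied to $K_{i}$ gives $\lambda x+\mu y\in K_{i}$. Third, since $i$ was arbitrary, $\lambda x+\mu y$ lies in every $K_{i}$, hence in $\bigcap_{i\in I}K_{i}=K$. This is exactly the $p$-convexity of $K$.

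There is no genuine obstacle. The only point to watch is that all the $K_{i}$ must be $p$-convex for one common parameter $p$, which is how the statement reads. If the parameters varied, one would instead need Theorem~\ref{multiple-p-convexity} (under the hypothesis $0\in K_i$) to pass to the smallest parameter, but that is not required here.

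We will also note the degenerate case $K=\emptyset$, which is $p$-convex vacuously. Likewise, if $I=\emptyset$ and the intersection is taken to be $\mathbb{R}^{n}$, the conclusion holds because $\mathbb{R}^{n}$ is trivially $p$-convex.
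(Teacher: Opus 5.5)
Your proof is correct: fixing $x,y$ in the intersection and a pair $\lambda,\mu\ge 0$ with $\lambda^{p}+\mu^{p}=1$, you apply the $p$-convexity of each $K_i$ and intersect. This is the standard argument behind the result, which the paper does not prove but attributes to its cited sources. Your side remarks on the empty cases and on differing parameters are harmless but not needed for the statement as posed, since all $K_i$ share one fixed $p$.
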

\begin{theorem}
Let $K,H \subseteq \mathbb{R}^{n}$ be $p$-convex sets, and let $\nu \in \mathbb{R}$. Then both
$K+H$ and $\nu K$ are $p$-convex sets.
\label{Suma p convexa}
\end{theorem}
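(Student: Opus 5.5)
The plan is to verify the defining condition directly from the first form of the definition of $p$-convexity, using nothing beyond the linearity of the combination $\lambda x+\mu y$ in its arguments. No earlier result (such as Theorem~\ref{multiple-p-convexity} or Theorem~\ref{interseccion de p convexos}) is needed. Throughout, fix $\lambda,\mu\ge 0$ with $\lambda^{p}+\mu^{p}=1$.

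\textbf{The sum $K+H$.} Take two points $z_1,z_2\in K+H$ and write $z_1=k_1+h_1$ and $z_2=k_2+h_2$ with $k_i\in K$, $h_i\in H$. Regroup the combination as
\[
\lambda z_1+\mu z_2=(\lambda k_1+\mu k_2)+(\lambda h_1+\mu h_2).
\]
The same pair $(\lambda,\mu)$ works for both $K$ and $H$. By $p$-convexity of $K$ the first bracket lies in $K$, and by $p$-convexity of $H$ the second lies in $H$. Hence the combination lies in $K+H$.

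\textbf{The dilation $\nu K$.} Take $\nu x,\nu y\in\nu K$ with $x,y\in K$. Then
\[
\lambda(\nu x)+\mu(\nu y)=\nu(\lambda x+\mu y)\in\nu K,
\]
again by $p$-convexity of $K$.

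\textbf{Degenerate cases.} If $\nu=0$ and $K\neq\emptyset$, then $\nu K=\{0\}$. This set is $p$-convex by the Remark, and it is also covered by the computation above. If $K$ or $H$ is empty, the statement holds vacuously.

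\textbf{Main obstacle.} I expect no real obstacle. The only point needing care is that a single pair $(\lambda,\mu)$ serves both summands simultaneously, so no reparametrization is required.
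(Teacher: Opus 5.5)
Your proposal is correct and matches the paper's proof: the same decomposition $z_i=k_i+h_i$ with regrouping for $K+H$, and the same factoring $\nu(\lambda x+\mu y)$ for $\nu K$. The only difference is cosmetic. The paper treats $\nu=0$ as a separate trivial case, while you also observe that the general computation already covers it, and you add the vacuous empty-set case.
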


\begin{proof}
We first show that $K+H$ is $p$-convex.  
Let $x,y \in K+H$ and let $\lambda,\mu \geq 0$ satisfy $\lambda^{p}+\mu^{p}=1$. Then there exist
$k_{1},k_{2} \in K$ and $h_{1},h_{2} \in H$ such that
\[
x = k_{1}+h_{1}, \qquad y = k_{2}+h_{2}.
\]
Hence,
\[
\lambda x + \mu y
= (\lambda k_{1} + \mu k_{2}) + (\lambda h_{1} + \mu h_{2}).
\]
Since $K$ and $H$ are $p$-convex, we have
\(\lambda k_{1} + \mu k_{2} \in K\) and
\(\lambda h_{1} + \mu h_{2} \in H\), and therefore
\(\lambda x + \mu y \in K+H\).

Next, we show that $\nu K$ is $p$-convex.  
If $\nu = 0$, then $\nu K = \{0\}$, which is trivially $p$-convex.  
Assume $\nu \neq 0$ and let $x,y \in \nu K$. Then there exist $k_{1},k_{2} \in K$ such that
$x=\nu k_{1}$ and $y=\nu k_{2}$. Thus,
\[
\lambda x + \mu y
= \nu(\lambda k_{1} + \mu k_{2}).
\]
Since $K$ is $p$-convex, it follows that
\(\lambda k_{1} + \mu k_{2} \in K\), and hence
\(\lambda x + \mu y \in \nu K\).
This proves that $\nu K$ is $p$-convex.
\end{proof}
\section{Topological properties}
\begin{definition}
 Let $K\subseteq \R^{n}$  and $\delta>0$. The set $U(K,\delta)=K+\mathbb{B}_{q}(0, \delta)$  is called a tubular neighborhood of $K$. According to Proposition \ref{ejemplo bola centra origen} and Theorem \ref{Suma p convexa}, $U(K,\delta)$ is a p-convex set if $K$ is a p-convex set.
\end{definition}
\begin{remark}
If $K\subseteq \R^{n}$, it is straightforward to show that $K+\mathbb{B}_{q}(0,\delta)=\{x\in \R^{n}:d_{q}(x,K)<\delta\}$
\end{remark}
\begin{corollary} Let $K\subseteq \R^{n}$ be a p-convex set. Then $\overline{K}$, is also a  p-convex set.
\end{corollary}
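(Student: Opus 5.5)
The plan is to write the closure as an intersection of tubular neighborhoods and then apply the results already established. First I will check that
\[
\overline{K}=\bigcap_{\delta>0} U(K,\delta)=\bigcap_{\delta>0}\{x\in\mathbb{R}^n : d_q(x,K)<\delta\}.
\]
This follows from the preceding Remark: a point lies in $\overline{K}$ if and only if $d_q(x,K)=0$, which happens if and only if $d_q(x,K)<\delta$ for every $\delta>0$. Next, for each $\delta>0$, the ball $\mathbb{B}_q(0,\delta)$ contains the origin, so it is $p$-convex by Proposition~\ref{ejemplo bola centra origen}. Theorem~\ref{Suma p convexa} then shows that $U(K,\delta)=K+\mathbb{B}_q(0,\delta)$ is $p$-convex, as already noted in the Definition. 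Finally, Theorem~\ref{interseccion de p convexos} gives that the intersection over all $\delta>0$ is $p$-convex, and hence $\overline{K}$ is $p$-convex.

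As a cross-check, and as a fallback if one prefers to avoid the tubular-neighborhood description, I would also give the direct sequential argument. Take $x,y\in\overline{K}$ and $\lambda,\mu\ge0$ with $\lambda^p+\mu^p=1$. Choose $x_k\to x$ and $y_k\to y$ with $x_k,y_k\in K$. Then $\lambda x_k+\mu y_k\in K$ by $p$-convexity, and this sequence converges to $\lambda x+\mu y$ because linear combinations with fixed coefficients are continuous. So $\lambda x+\mu y\in\overline{K}$.

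The only step needing any care is the identity $\overline{K}=\bigcap_{\delta>0}U(K,\delta)$, in particular the use of the distance characterization. The case $K=\emptyset$ must be handled separately: there $U(K,\delta)=\emptyset$, and both sides are empty, so the identity still holds. Apart from this, the argument is a direct chain of the cited results.
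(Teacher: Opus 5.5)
Your proof is correct and follows the paper's argument: you write $\overline{K}=\bigcap_{\delta>0}U(K,\delta)$ using the distance characterization, note that each $U(K,\delta)$ is $p$-convex by Proposition~\ref{ejemplo bola centra origen} and Theorem~\ref{Suma p convexa}, and conclude with Theorem~\ref{interseccion de p convexos}. Your sequential fallback is also valid, and it is more self-contained, since it uses only the continuity of $(x,y)\mapsto\lambda x+\mu y$.
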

\begin{proof} $\displaystyle \overline{K}=\{x\in \R^{n}\colon d_{q}(x,K)=0\}=\bigcap_{\delta>0}\{x\in \R^{n}\colon d_{q}(x,K)< \delta\}=\bigcap_{\delta>0}U(K,\delta)$. The conclusion follows directly from the application of Theorem \ref{interseccion de p convexos}.
\end{proof}

The following lemma is a key result for the theorem that follows.
\begin{lemma}
Let $K \subseteq \mathbb{R}^{n}$ be a $p$-convex set, with
$x \in \operatorname{int} K$ and $y \in \overline{K}$. Then
\[
[x,y)_{p} \subseteq \operatorname{int} K.
\]
\label{trazo p convexo}
\end{lemma}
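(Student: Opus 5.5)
Following the classical proof for convex sets, we fix $\lambda\in(0,1]$ and set $z:=\lambda x+\mu y$ with $\mu:=(1-\lambda^{p})^{1/p}$. The goal is a radius $r>0$ with $\mathbb{B}_{q}(z,r)\subseteq K$. Since $x\in\operatorname{int}K$, there is $\delta>0$ with $\mathbb{B}_{q}(x,\delta)\subseteq K$.

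\textbf{Case $\lambda=1$.} Here $\mu=0$ and $z=x\in\operatorname{int}K$, so there is nothing to prove.

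\textbf{Case $0<\lambda<1$, so $\mu>0$.} The key observation is that $y$ need not lie in $K$, only in $\overline{K}$, so we approximate it. For any $y'\in K$ and $u\in\mathbb{B}_{q}(x,\delta)$, $p$-convexity gives $\lambda u+\mu y'\in K$, hence
\[
\lambda\,\mathbb{B}_{q}(x,\delta)+\mu y' = \mathbb{B}_{q}(\lambda x+\mu y',\lambda\delta)\subseteq K .
\]
Choose $y'\in K$ with $\|y-y'\|_{q}<\lambda\delta/(2\mu)$. Then $\|z-(\lambda x+\mu y')\|_{q}=\mu\|y-y'\|_{q}<\lambda\delta/2$. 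By the triangle inequality,
\[
\mathbb{B}_{q}(z,\lambda\delta/2)\subseteq\mathbb{B}_{q}(\lambda x+\mu y',\lambda\delta)\subseteq K ,
\]
so $z\in\operatorname{int}K$.

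\textbf{Anticipated difficulty.} The delicate point is the approximation step: $y$ may lie outside $K$, so we must perturb $y$ rather than $x$, and the perturbation has to be scaled by $\mu$. This is harmless because $\mu\le 1$, and the scaling only requires $\lambda>0$. The coefficients $\lambda$ and $\mu$ stay fixed throughout, so the nonlinearity $(1-\lambda^{p})^{1/p}$ never interferes. In particular the argument never needs $\lambda+\mu=1$, nor that $0\in K$ or that $K$ be star-shaped, which is exactly where $p$-convexity departs from ordinary convexity.
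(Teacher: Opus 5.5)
Your proof is correct. It uses the same two ingredients as the paper: the fixed-coefficient inclusion $\lambda\,\mathbb{B}_q(x,\delta)+\mu y'\subseteq K$ for $y'\in K$ (the paper's Case 1), and approximating $y$ by a point of $K$ at distance of order $\lambda\delta/\mu$.

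Where you differ is in how the closure case is finished. The paper treats $y\in\overline{K}\setminus K$ separately and moves the perturbation into the first slot, writing $z=\lambda(x-v)+\mu k$ with $k\in K$ and $\|v\|_q<\delta$. This only yields $z\in K$. The paper then invokes Case 1, but read literally that gives $[x,z)_p\subseteq\operatorname{int}K$, and this set does not contain $z$ in general. So the step really needs Case 1 applied to the pair $(x-v,k)$, using $x-v\in\mathbb{B}_q(x,\delta)\subseteq\operatorname{int}K$.

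Your choice of $y'$ at half the radius, together with the triangle inequality, gives the explicit ball $\mathbb{B}_q(z,\lambda\delta/2)\subseteq K$ directly. It works the same whether or not $y\in K$, so you need no case split and no second appeal to the interior case. It is the tidier write-up.

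A minor point: your remark that $\mu\le 1$ makes the scaling harmless is unnecessary. The $\mu$ you divide by when choosing $y'$ cancels exactly when you multiply back.
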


\begin{proof}
We distinguish two cases.

\medskip
\noindent\textbf{Case 1: $y \in K$.}
Let $z \in [x,y)_{p}$. Then there exists $\lambda \in (0,1]$ such that
\[
z = \lambda x + (1-\lambda^{p})^{1/p} y.
\]
Since $x \in \operatorname{int} K$, there exists $\delta > 0$ such that
$\mathbb{B}_{q}(x,\delta) \subseteq K$.
We claim that $\mathbb{B}_{q}(z,\lambda\delta) \subseteq K$.
Indeed, let $u \in \mathbb{B}_{q}(z,\lambda\delta)
= z + \lambda \mathbb{B}_{q}(0,\delta)$.
Then $u = z + \lambda w$ for some $w \in \mathbb{B}_{q}(0,\delta)$, and hence
\[
u = \lambda(x+w) + (1-\lambda^{p})^{1/p} y.
\]
Since $x+w \in \mathbb{B}_{q}(x,\delta) \subseteq K$, $y \in K$, and $K$ is
$p$-convex, it follows that $u \in K$.
Thus $z \in \operatorname{int} K$, and therefore
\[
[x,y)_{p} \subseteq \operatorname{int} K \quad \text{for all } y \in K.
\]

\medskip
\noindent\textbf{Case 2: $y \in \overline{K} \setminus K$.}
Let
\[
z = \lambda x + (1-\lambda^{p})^{1/p} y
\quad \text{for some } \lambda \in (0,1).
\]
Since $x \in \operatorname{int} K$, there exists $\delta > 0$ such that
$\mathbb{B}_{q}(x,\delta) \subseteq K$.
Because $y \in \overline{K}$, there exists
\[
k \in K \cap \mathbb{B}_{q}\!\left(
y, \frac{\lambda}{(1-\lambda^{p})^{1/p}}\delta \right).
\]
Hence $k = y + \frac{\lambda}{(1-\lambda^{p})^{1/p}} v$ for some
$v \in \mathbb{B}_{q}(0,\delta)$, and therefore
\[
(1-\lambda^{p})^{1/p} y
= (1-\lambda^{p})^{1/p} k - \lambda v.
\]
Substituting into the expression for $z$, we obtain
\[
z = \lambda(x-v) + (1-\lambda^{p})^{1/p} k.
\]
Since $x-v \in \mathbb{B}_{q}(x,\delta) \subseteq K$ and $k \in K$, the
$p$-convexity of $K$ implies that $z \in K$.
Applying Case~1, we conclude that
\[
[x,z)_{p} \subseteq \operatorname{int} K.
\]
As $z \in [x,y)_{p}$ was arbitrary, it follows that
\[
[x,y)_{p} \subseteq \operatorname{int} K.
\]
This completes the proof.
\end{proof}

\begin{theorem} Let $K\subseteq \R^{n}$
\begin{itemize}
\item [$(a)$] If $K$ is p-convex set, then $\mathrm{int} K$ is a $p$-convex set.
\item [$(b)$] If $K$ is a p-convex set and  $\mathrm{int} K\neq \emptyset$, then $\overline{\mathrm{int} K}=\overline{K}$.
\item [$(c)$]If $K$ is a p-convex set and  $\mathrm{int} K\neq \emptyset$, then $\mathrm{int} K=\mathrm{int} \overline{K}$.
\end{itemize}
\end{theorem}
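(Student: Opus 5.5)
For part (a), I would apply Lemma~\ref{trazo p convexo} directly. Take $x,y\in\operatorname{int}K$ and $\lambda\in[0,1]$, and set $z=\lambda x+(1-\lambda^p)^{1/p}y$. If $\lambda\in(0,1]$, then $z\in[x,y)_p$; since $x\in\operatorname{int}K$ and $y\in K\subseteq\overline K$, the lemma gives $z\in\operatorname{int}K$. If $\lambda=0$, then $z=y\in\operatorname{int}K$ trivially. Hence $\operatorname{int}K$ is $p$-convex. The empty case is vacuous.

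For part (b), the inclusion $\overline{\operatorname{int}K}\subseteq\overline K$ is immediate. For the reverse inclusion, fix $x_0\in\operatorname{int}K$ and $y\in\overline K$. By the lemma, $z_\lambda:=\lambda x_0+(1-\lambda^p)^{1/p}y\in\operatorname{int}K$ for all $\lambda\in(0,1]$. As $\lambda\to0^+$ we have $(1-\lambda^p)^{1/p}\to1$, so $z_\lambda\to y$. Therefore $y\in\overline{\operatorname{int}K}$.

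For part (c), the inclusion $\operatorname{int}K\subseteq\operatorname{int}\overline K$ is clear. For the converse, I would argue as follows. Let $y\in\operatorname{int}\overline K$ and fix $x_0\in\operatorname{int}K$. The goal is to write $y$ as a point of a segment $[x,w)_p$ with $x\in\operatorname{int}K$ and $w\in\overline K$, and then apply the lemma. In the classical convex case one pushes $y$ slightly beyond itself, away from $x_0$. Here the curve $\lambda\mapsto\lambda x+(1-\lambda^p)^{1/p}w$ is not a straight segment, so I would solve for $w$ explicitly. Choose a small $\lambda\in(0,1)$ and define
\[
w:=\frac{y-\lambda x_0}{(1-\lambda^p)^{1/p}}.
\]
Then $y=\lambda x_0+(1-\lambda^p)^{1/p}w$ by construction. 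As $\lambda\to0^+$, we have $w\to y$, and since $y\in\operatorname{int}\overline K$, for $\lambda$ small enough $w$ lies in a ball around $y$ contained in $\overline K$. The lemma then gives $y\in[x_0,w)_p\subseteq\operatorname{int}K$.

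The only delicate step is this continuity argument in (c), namely checking that $w\in\overline K$ for sufficiently small $\lambda>0$. It follows from $w\to y$ together with the openness of $\operatorname{int}\overline K$. Note that the case $y=x_0$ needs no special treatment: the argument above covers it, and in any event $x_0\in\operatorname{int}K$ already.
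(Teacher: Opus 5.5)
Your proposal is correct and follows the paper's proof essentially step for step. Parts (a) and (b) apply Lemma~\ref{trazo p convexo} exactly as the paper does, and in (c) your $w=(y-\lambda x_0)/(1-\lambda^p)^{1/p}$ with $\lambda\to0^+$ is the paper's $h(\lambda)$ with $\lambda\to1^-$, with the two weights swapped; your version matches the parametrization of $[x_0,w)_p$ directly, which is slightly cleaner.
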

\begin{proof}
$(a)$: Suppose $K$ is a p-convex set and $x,y\in \mathrm{int} K$. By lemma \ref{trazo p convexo}, we know that  $[x,y)_{p}\subseteq \mathrm{int} K$, which implies $[x,y]_{p}\subseteq \mathrm{int} K$. This shows that $\mathrm{int} K$ is a p-convex set. $(b)$: Let $z\in\overline{K}$ and $x_{0}\in \mathrm{int} K$, noting that $\mathrm{int} K\neq \emptyset$. By lemma \ref{trazo p convexo}, we have $[x_{0},z)_{p}\subseteq \mathrm{int} K$. Define $\displaystyle z_{n}=\frac{1}{n}x_{0}+\left(1-\left(\frac{1}{n}\right)^{p}\right)^{\frac{1}{p}}z\in[x_{0},z)_{p}$. Then, $z_{n}\in \mathrm{int} K$ and $z_{n}\to z$, which implies that  $z\in \overline{\mathrm{int} K}$. Therefore, we have $\overline{\mathrm{int} K}=\overline{K}$, since $\overline{\mathrm{int} K}\subseteq\overline{K}$. $(c)$: Let $z\in \mathrm{int} \overline{K}$.Then, there exists $r>0$ such that $\mathbb{B}(z,r)\subseteq \overline{K}$. Consider $x_{0}\in \mathrm{int} K$. For continuity of the function $h(\lambda)=\frac{1}{\lambda}z-\frac{(1-\lambda^{p})^{\frac{1}{p}}}{\lambda}x_{0}$, we can find $\lambda\in (0,1)$ close to $1$ such that $w=\frac{1}{\lambda}z-\frac{(1-\lambda^{p})^{\frac{1}{p}}}{\lambda}x_{0} \in \mathbb{B}(z,r)$. Therefore, $w\in \overline{K}$,  and hence $z=\lambda w+(1-\lambda^{p})^{\frac{1}{p}}x_{0}\in [x_{0},w)_{p}$. By lemma \ref{trazo p convexo}, $[x_{0},w)_{p}\subseteq \mathrm{int} K$, then $z\in \mathrm{int} K$. This clearly forces $\mathrm{int} K=\mathrm{int} \overline{K}$, since $\mathrm{int} K\subseteq\mathrm{int} \overline{K}$.
\end{proof}
\section{P-convex functions}
In \cite{SETA}, an interesting algebraic characterization of p-convex functions is obtained.
\begin{theorem}[\cite{SETA}] Let $K\subseteq \R^{n}$ and let $f:K\rightarrow \R$ be a function. Then, $f$ is a
p-convex function if and only if $K$ is a $p$-convex set, for all $\lambda,\mu\geq 0$ such
that $\lambda^{p}+\mu^{p}=1$ and for each $x,y\in K$
\begin{equation}
f(\lambda x+\mu y)\leq \lambda f(x)+\mu f(y)
\label{Convex function}
\end{equation}
\end{theorem}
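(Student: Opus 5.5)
We take as definition (following \cite{SETA}) that $f:K\to\R$ is $p$-convex when its epigraph $\mathrm{epi}\,f=\{(x,r)\in K\times\R : f(x)\le r\}$ is a $p$-convex subset of $\R^{n+1}$. The plan is to prove both implications directly from this definition, using only the definition of $p$-convex sets.

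\textbf{($\Rightarrow$).} Assume $\mathrm{epi}\,f$ is $p$-convex. First we show that $K$ is $p$-convex: given $x,y\in K$ and $\lambda,\mu\ge0$ with $\lambda^p+\mu^p=1$, the points $(x,f(x))$ and $(y,f(y))$ lie in $\mathrm{epi}\,f$. Hence
\[
\lambda(x,f(x))+\mu(y,f(y))=(\lambda x+\mu y,\lambda f(x)+\mu f(y))\in \mathrm{epi}\,f.
\]
Reading off the first coordinate gives $\lambda x+\mu y\in K$. The second coordinate gives $f(\lambda x+\mu y)\le \lambda f(x)+\mu f(y)$, which is inequality \eqref{Convex function}.

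\textbf{($\Leftarrow$).} Assume $K$ is $p$-convex and that \eqref{Convex function} holds. Take $(x,r),(y,s)\in\mathrm{epi}\,f$, so $f(x)\le r$ and $f(y)\le s$, and let $\lambda,\mu\ge 0$ with $\lambda^p+\mu^p=1$. The point $\lambda x+\mu y$ belongs to $K$ by $p$-convexity of $K$. Since $\lambda,\mu\ge0$,
\[
f(\lambda x+\mu y)\le\lambda f(x)+\mu f(y)\le \lambda r+\mu s.
\]
Therefore $\lambda(x,r)+\mu(y,s)\in\mathrm{epi}\,f$, so $\mathrm{epi}\,f$ is $p$-convex.

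There is no real obstacle here. The only points needing care are to state clearly which definition of $p$-convex function is being used, and to note that the nonnegativity of $\lambda$ and $\mu$ is exactly what lets the inequalities $f(x)\le r$ and $f(y)\le s$ be combined. No earlier result of the paper is needed.
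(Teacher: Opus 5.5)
Your proof is correct. The paper gives no proof of its own: the statement is quoted from \cite{SETA}, and the paper never formally states the definition of a $p$-convex function. It only alludes to the epigraph definition in the introduction and uses it in the proof of the theorem on positively homogeneous functions. Your argument is the standard direct verification from that definition and fills this gap. The forward direction uses that $f$ is real-valued, so that $(x,f(x))\in\mathrm{epi}\,f$, and the converse uses $\lambda,\mu\ge 0$ to combine $f(x)\le r$ and $f(y)\le s$, as you point out.
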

\begin{example}[\cite{SETA}, \cite{HZ}]
Let $K \subseteq \mathbb{R}^{n}$ be a $p$-convex set and let $\alpha \in \mathbb{R}$. Then, the following functions are $p$-convex on $K$:
\begin{itemize}
\item[$(a)$] $\displaystyle f(x) = \alpha \sum_{i=1}^{n} x_i$;
\item[$(b)$] $\displaystyle g(x) = \|x\|_q$, for $q \ge 1$;
\item[$(c)$] $f : [0,1] \to \mathbb{R}$, $f(x) = \sqrt{x} - 2.$
\end{itemize}
\end{example}
\begin{remark}
\begin{itemize}
\item[$(a)$] The following example shows that not every convex function is $p$-convex.  
Let $f : [0,2] \to \mathbb{R}$ be defined by $f(x) = (x-1)^2$. Consider
\[
f\Big(\frac{1}{4} \cdot 0 + \frac{1}{4} \cdot 1\Big) = f\Big(\frac{1}{4}\Big) = \frac{9}{16} > \frac{1}{4} f(0) + \frac{1}{4} f(1) = \frac{1}{4}.
\]
Hence, $f$ is not $\frac{1}{2}$-convex.

\item[$(b)$] Conversely, $p$-convexity does not imply convexity.  
Let $f : [0,1] \to \mathbb{R}$ be defined by $f(x) = -\frac{x^2}{2} - \frac{1}{2}$. Clearly, $f$ is not convex. We claim that $f$ is $\frac{1}{2}$-convex.  

Indeed, let $x,y \in [0,1]$ and $\lambda, \mu \ge 0$ with $\lambda^{\frac{1}{2}} + \mu^{\frac{1}{2}} = 1$. Then,
\[
\begin{aligned}
\lambda f(x) + \mu f(y) - f(\lambda x + \mu y) 
&= -\frac{\lambda x^2}{2} - \frac{\lambda}{2} - \frac{\mu y^2}{2} - \frac{\mu}{2} + \frac{(\lambda x + \mu y)^2}{2} + \frac{1}{2} \\
&= \frac{(\lambda^2 - \lambda)x^2}{2} + \frac{(\mu^2 - \mu)y^2}{2} + \frac{1 - \lambda - \mu}{2} + \lambda \mu xy \\
&\ge \frac{1}{2}(\lambda^2 + \mu^2 + 1 - 2\lambda - 2\mu), \quad \text{with } \mu = (1-\sqrt{\lambda})^2 \\
&\ge \lambda^2 + \lambda - 2\lambda^{3/2} \\
&\ge 0.
\end{aligned}
\]
This confirms that $f$ is indeed $\frac{1}{2}$-convex.
\end{itemize}
\label{remark:comparison_examples}    
\end{remark}
\begin{theorem}
Let $K \subseteq \mathbb{R}^{n}$ be a $p$-convex cone with $0 < p \le 1$.  
If $f: K \to \mathbb{R}$ is $p$-convex and positively homogeneous, then $f$ is convex.
\end{theorem}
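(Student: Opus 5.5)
We first note that $K$ is a convex cone. By Proposition~\ref{cono convexo}, applied to the cone $K$ (which satisfies $\alpha K\subseteq K$ for all $0<\alpha\le 1$), the $p$-convexity of $K$ gives $K+K\subseteq K$. Combining this with $tK\subseteq K$ for $t>0$, we get $tx+(1-t)y\in K$ for all $x,y\in K$ and $t\in[0,1]$; the endpoints $t=0,1$ are trivial. So $K$ is convex and the convexity inequality for $f$ is meaningful.

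The core step is to show that $f$ is subadditive on $K$, i.e.\ $f(x+y)\le f(x)+f(y)$. We use the $p$-convex combination with $\lambda=\mu=2^{-1/p}$, which satisfies $\lambda^p+\mu^p=1$. The characterization of $p$-convex functions from \cite{SETA} then gives
\[
f\bigl(2^{-1/p}x+2^{-1/p}y\bigr)\le 2^{-1/p}f(x)+2^{-1/p}f(y).
\]
Positive homogeneity turns the left-hand side into $2^{-1/p}f(x+y)$. Multiplying by $2^{1/p}>0$ yields subadditivity.

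Convexity then follows from subadditivity and homogeneity. For $t\in(0,1)$ and $x,y\in K$,
\[
f(tx+(1-t)y)\le f(tx)+f((1-t)y)=tf(x)+(1-t)f(y).
\]
The cases $t=0$ and $t=1$ are trivial.

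The main obstacle is what "positively homogeneous" means in this paper. We use it as $f(\alpha x)=\alpha f(x)$ for all $\alpha>0$ and $x\in K$. With that reading, nothing beyond the two scalings $2^{\pm 1/p}$ and the scalings $t$, $1-t$ is needed. If the paper's convention allows $\alpha=0$ or requires $0\in K$, those boundary cases are harmless because $t=0$ and $t=1$ are trivial anyway. The other point to check is that the argument does not rely on $0\in K$: the cone property is used only through multiplication by positive scalars.
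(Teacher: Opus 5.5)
Your proof is correct and is essentially the paper's argument. The paper notes that $\operatorname{epi} f$ is a $p$-convex cone, invokes Proposition~\ref{cono convexo} to get closure under addition, and then scales by $\lambda$ and $1-\lambda$; you run the same $2^{-1/p}$ trick directly on $f$ to get subadditivity, then use homogeneity. That also lets you check explicitly that $K$ is convex and use only positive scalars, avoiding the paper's $\lambda\ge 0$ scaling of $\operatorname{epi} f$, which at $\lambda=0$ tacitly needs $0\in K$.
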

\begin{proof}
We first note that $epif=\{(x,\alpha)\in \R^{n+1}\colon x\in K,\, \alpha\in \R,\, f(x)\leq \alpha\}$  is a p-convex cone, since $f$ is a p-convex function.
Moreover, because $f$ is positively homogeneous, for any $\lambda \ge 0$,
\[
(\lambda x, \lambda \alpha) \in \operatorname{epi} f \quad \text{whenever } (x,\alpha) \in \operatorname{epi} f,
\]
so $\operatorname{epi} f$ is a cone.

By Proposition~\ref{cono convexo}, the sum of two elements in a $p$-convex cone also belongs to the cone:
\[
(x_1, \alpha_1), (x_2, \alpha_2) \in \operatorname{epi} f \implies (x_1+x_2, \alpha_1+\alpha_2) \in \operatorname{epi} f.
\]

This implies that for any $x, y \in K$ and $\lambda \in [0,1]$,
\[
f(\lambda x + (1-\lambda)y) \le \lambda f(x) + (1-\lambda) f(y),
\]
which is precisely the definition of convexity. Hence, $f$ is convex.
\end{proof}

\begin{theorem}
Let $K \subseteq \mathbb{R}^{n}$ be a $p$-convex set with $0 < p < 1$, and let $f: K \to \mathbb{R}$ be a $p$-convex function. If $\bar{x} \in K$ is a local minimum of $f$, then
\[
f(\bar{x}) \leq 0.
\]
\label{minimo local}
\end{theorem}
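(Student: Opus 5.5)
The plan is to test $f$ against points of the form $\bar{x}$ scaled slightly below $1$. Take $x=y=\bar{x}$ in the characterization \eqref{Convex function}, with $\lambda=\mu=2^{-1/p}$, so that $\lambda^p+\mu^p=1$. This gives
\[
f\bigl(2^{1-\frac{1}{p}}\bar{x}\bigr)\le 2^{1-\frac{1}{p}} f(\bar{x}).
\]
This is only a single point, though, and it need not lie near $\bar{x}$. Local minimality is a statement about a neighbourhood, so we instead need a family of points $c\,\bar{x}$ with $c\to 1$, each satisfying an inequality of the form $f(c\bar{x})\le c f(\bar{x})$ with $c<1$.

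To produce such a family, take a general $\lambda\in(0,1)$ and set $\mu=(1-\lambda^p)^{1/p}$, again with $x=y=\bar{x}$. Write $c(\lambda):=\lambda+(1-\lambda^p)^{1/p}$. Then $c(\lambda)\bar{x}\in K$ by $p$-convexity of $K$, and \eqref{Convex function} yields
\[
f\bigl(c(\lambda)\bar{x}\bigr)\le c(\lambda)\, f(\bar{x}).
\]
Since $0<p<1$ and $\lambda\in(0,1)$, we have $\lambda^p+\mu^p=1$ with both terms in $(0,1)$. This forces $\lambda<\lambda^p$ and $\mu<\mu^p$, hence $c(\lambda)<1$ strictly. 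Moreover $c(\lambda)\to 1$ as $\lambda\to 1^-$, because $(1-\lambda^p)^{1/p}\to 0$. The strict inequality $c(\lambda)<1$ is precisely where the hypothesis $p<1$ enters.

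We now argue by contradiction and suppose $f(\bar{x})>0$. Choose a neighbourhood of $\bar{x}$ on which $f\ge f(\bar{x})$. Two cases arise. If $\bar{x}=0$, then $c(\lambda)\bar{x}=\bar{x}$ and the inequality becomes $f(0)\le c(\lambda)f(0)$ with $c(\lambda)<1$, which forces $f(0)\le 0$ directly. If $\bar{x}\neq 0$, then $\|c(\lambda)\bar{x}-\bar{x}\|_q=(1-c(\lambda))\|\bar{x}\|_q\to 0$, so for $\lambda$ close to $1$ the point $c(\lambda)\bar{x}$ lies in that neighbourhood and in $K$. This gives
\[
f(\bar{x})\le f\bigl(c(\lambda)\bar{x}\bigr)\le c(\lambda)f(\bar{x})<f(\bar{x}),
\]
a contradiction. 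Hence $f(\bar{x})\le 0$.

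I expect the only real obstacle to be producing feasible test points arbitrarily close to $\bar{x}$. The single choice $\lambda=\mu=2^{-1/p}$ gives a point a fixed distance away, which is why I pass to the whole family $c(\lambda)$. Beyond that, the argument only needs the elementary verification that $0<c(\lambda)<1$ and $c(\lambda)\to 1$ as $\lambda\to 1^-$.
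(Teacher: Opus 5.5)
Your proof is correct and is essentially the paper's argument: the paper also tests $f$ at the scaled points $\bigl(\lambda^{1/p}+(1-\lambda)^{1/p}\bigr)\bar{x}$, which are your $c(\lambda)\bar{x}$ after the reparametrization $\lambda\mapsto\lambda^{p}$, chooses the parameter so that the point lies in the local-minimality ball, and concludes from $f(\bar{x})\le c\,f(\bar{x})$ with $c<1$. Your case split on $\bar{x}=0$ and the contradiction framing are harmless but unnecessary, since $(1-c)f(\bar{x})\le 0$ already gives $f(\bar{x})\le 0$ directly.
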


\begin{proof}
Let $\bar{x} \in K$ be a local minimum of $f$. By definition, there exists $\delta > 0$ such that
\[
f(\bar{x}) \le f(x) \quad \text{for all } x \in K \cap \mathbb{B}(\bar{x}, \delta).
\]

Since $K$ is $p$-convex, for any $\lambda \in (0,1)$ the $p$-convex combination
\[
z =  \lambda^{\frac{1}{p}} \bar{x} + \left(1 -\lambda\right)^{\frac{1}{p}} \bar{x}
\]
also belongs to $K$. For continuity, we can select $\lambda_{0}\in(0,1)$ such that
\[
z = \lambda^{\frac{1}{p}}_{0} \bar{x} + \left(1 -\lambda_{0}\right)^{\frac{1}{p}} \bar{x} = (\lambda^{\frac{1}{p}}_{0} + \left(1 -\lambda_{0}\right)^{\frac{1}{p}})\bar{x}\in \mathbb{B}(\bar{x}, \delta).\] By the local minimality of $\bar{x}$,
\[
f(\bar{x}) \le f(z).
\]

Using the $p$-convexity of $f$:
\[
f(z) 
\le  \lambda^{\frac{1}{p}}_{0} f(\bar{x}) + \left(1 -\lambda_{0}\right)^{\frac{1}{p}} f(\bar{x})=(\lambda^{\frac{1}{p}}_{0} + \left(1 -\lambda_{0}\right)^{\frac{1}{p}})f(\bar{x}).
\]

Hence,
\[
\left((\lambda^{\frac{1}{p}}_{0} + \left(1 -\lambda_{0}\right)^{\frac{1}{p}})- 1\right) f(\bar{x}) \ge 0.
\]

Since $0 < p < 1$, we have $(\lambda^{\frac{1}{p}}_{0} + \left(1 -\lambda_{0}\right)^{\frac{1}{p}}) - 1 < 0$, and thus the inequality above implies
\[
f(\bar{x}) \le 0,
\]
which completes the proof.
\end{proof}

\begin{theorem}
Let $K \subseteq \mathbb{R}^{n}$ be a $p$-convex set with $0 \in K$, and let $f: K \to \mathbb{R}$ be a $p$-convex function with $0 < p < 1$. Then
\[
f(0) \leq 0.
\]
\label{existence_scalar_minimum}
\end{theorem}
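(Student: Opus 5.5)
The plan is to apply the functional inequality \eqref{Convex function} with both points equal to the origin. Since $0\in K$ and $K$ is $p$-convex, every $p$-convex combination $\lambda\cdot 0+\mu\cdot 0=0$ lies in $K$, so the inequality is meaningful at $x=y=0$ for every admissible pair $(\lambda,\mu)$. No appeal to local minimality, as in Theorem~\ref{minimo local}, is needed: the point $0$ is a fixed point of every $p$-convex combination, which is exactly what makes the argument work without topology.

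Concretely, fix $\lambda=\mu=2^{-1/p}$, which satisfies $\lambda^p+\mu^p=\tfrac12+\tfrac12=1$. The $p$-convexity of $f$ then gives
\[
f(0)=f\bigl(2^{-1/p}\cdot 0+2^{-1/p}\cdot 0\bigr)\le 2^{-1/p}f(0)+2^{-1/p}f(0)=2^{1-\frac1p}f(0).
\]
Rearranging, we obtain $\bigl(1-2^{1-\frac1p}\bigr)f(0)\le 0$.

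The final step is the sign check. Since $0<p<1$, we have $1-\tfrac1p<0$, so $2^{1-\frac1p}<1$ and the coefficient $1-2^{1-\frac1p}$ is strictly positive. Dividing by it yields $f(0)\le 0$.

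I do not expect any real obstacle. The only points needing care are that $0$ genuinely belongs to $K$, which is given as a hypothesis, and that $p<1$ is strict. For $p=1$ the coefficient vanishes and the argument collapses, consistent with the fact that convex functions may be positive at the origin. Alternatively, one could invoke Theorem~\ref{minimo local} if $0$ were known to be a local minimum, but the direct argument above avoids that assumption entirely.
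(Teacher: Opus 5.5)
Your proof is correct and is essentially the paper's argument: both apply the $p$-convexity inequality at $x=y=0$ with weights $2^{-1/p}$ to get $f(0)\le 2^{1-1/p}f(0)$. Your sign check is actually the correct one. The paper's proof asserts $2^{1-1/p}>1$ inside an unnecessary contradiction framing, which is a slip; for $0<p<1$ one has $2^{1-1/p}<1$, so $1-2^{1-1/p}>0$, exactly as you state.
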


\begin{proof}
Assume, by contradiction, that $f(0) > 0$.  
By $p$-convexity, for any $\lambda \in [0,1]$ and $x,y \in K$, we have
\[
f\big(\lambda^{\frac{1}{p}} x + (1-\lambda)^{1/p} y\big) \leq \lambda^{\frac{1}{p}} f(x) + (1-\lambda)^{1/p} f(y).
\]

Setting $x = y = 0$ and $\lambda = 1/2$, we obtain
\[
f(0) \leq \frac{1}{2^{1/p}} f(0) + \frac{1}{2^{1/p}} f(0) = 2^{1 - 1/p} f(0).
\]

Rewriting gives
\[
f(0) \big(1 - 2^{1 - 1/p}\big) \leq 0.
\]

Since $0 < p < 1$, we have $2^{1 - 1/p} > 1$, so $1 - 2^{1 - 1/p} < 0$. Consequently,
\[
f(0) \leq 0,
\]
contradicting our assumption. Hence, the statement follows.
\end{proof}

\begin{theorem}
Let $\mathbb{B}(\bar{x};\delta) \subseteq \mathbb{R}^{n}$ be a $p$-convex set, and let $f\colon \mathbb{B}(\bar{x};\delta) \to \mathbb{R}$ be a $p$-convex function. If $f$ is upper bounded on $\mathbb{B}(\bar{x};\delta)$, then $f$ is bounded on $\mathbb{B}(\bar{x};\delta)$.
\label{bounded function}
\end{theorem}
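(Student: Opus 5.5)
The plan is to keep the upper bound $M:=\sup_{\mathbb{B}(\bar{x};\delta)}f<+\infty$ and derive a uniform lower bound from the $p$-convexity inequality \eqref{Convex function}. I will imitate the classical ``reflection through the centre'' trick for convex functions: each point $x$ of the ball is paired with a reflected point $y$ of the ball so that a fixed $p$-convex combination of $x$ and $y$ is one and the same reference point $c$, independent of $x$.

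\textbf{Choice of weights and reference point.} I take the symmetric weights $\lambda=\mu=2^{-1/p}$, which satisfy $\lambda^p+\mu^p=1$. The reference point is
\[
c:=2^{-1/p}\bar{x}+2^{-1/p}\bar{x}=2^{1-1/p}\bar{x}.
\]
Since $\bar{x}\in\mathbb{B}(\bar{x};\delta)$ and the ball is $p$-convex by hypothesis, $c$ lies in the ball. Hence $f(c)$ is a finite number depending only on $\bar{x}$, $p$ and $f$.

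\textbf{Reflection.} For arbitrary $x\in\mathbb{B}(\bar{x};\delta)$ I set $y:=2\bar{x}-x$. Then $\|y-\bar{x}\|_q=\|\bar{x}-x\|_q<\delta$, so $y$ is also in the ball. Moreover
\[
2^{-1/p}x+2^{-1/p}y=2^{-1/p}\cdot 2\bar{x}=c.
\]
This choice comes from solving $\lambda x+\mu y=c$ with $y=\bar{x}+s(\bar{x}-x)$: the $x$-terms cancel when $s=\lambda/\mu$, and $y$ stays in the ball exactly when $s\le 1$. This is the step I expected to be the crux, because for a general pair $\lambda\neq\mu$ the reflected point may leave the ball. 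Taking $\lambda=\mu$ gives $s=1$ and resolves it.

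\textbf{Conclusion.} Applying \eqref{Convex function} gives
\[
f(c)\le 2^{-1/p}f(x)+2^{-1/p}f(y)\le 2^{-1/p}f(x)+2^{-1/p}M.
\]
Rearranging,
\[
f(x)\ge 2^{1/p}f(c)-M=2^{1/p}f\bigl(2^{1-1/p}\bar{x}\bigr)-M
\]
for every $x$ in the ball. The right-hand side is a constant independent of $x$, so $f$ is bounded below. Together with the assumed upper bound, $f$ is bounded on $\mathbb{B}(\bar{x};\delta)$.

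The only points to check are that $p$-convexity of the ball is used exactly once, namely to place $c$ in the domain, and that no sign assumption on $M$ or $f(c)$ is needed.
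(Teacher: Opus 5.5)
Your proof is correct and follows the paper's argument: the same reference point $2^{1-1/p}\bar{x}$, the same reflection $2\bar{x}-x$ through the centre with weights $\lambda=\mu=2^{-1/p}$, and the same lower bound $2^{1/p}f\bigl(2^{1-1/p}\bar{x}\bigr)-M$. Your one addition is to say explicitly why the reference point lies in the ball ($p$-convexity of the ball applied to $\bar{x},\bar{x}$), which the paper only asserts.
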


\begin{proof}
By hypothesis, there exists $M>0$ such that
\[
f(x) \leq M \quad \text{for all } x \in \mathbb{B}(\bar{x};\delta).
\]

To show that $f$ is also lower bounded, fix $\bar{z} \in \mathbb{R}^n$ such that $\bar{x} = 2^{\frac{1}{p}-1} \bar{z}$, and let $y \in \mathbb{B}(\bar{x};\delta)$. Define 
\[
x := 2\bar{x} - y = 2^{\frac{1}{p}} \bar{z} - y \in \mathbb{B}(\bar{x};\delta),
\]
so that 
\[
\bar{z} = \frac{x + y}{2^{\frac{1}{p}}} \in \mathbb{B}(\bar{x};\delta).
\]

By $p$-convexity of $f$, we have
\[
f(\bar{z}) = f\Big( \frac{x + y}{2^{1/p}} \Big) \leq 2^{-1/p} f(x) + 2^{-1/p} f(y).
\]

Rewriting this inequality yields
\[
f(y) \geq 2^{1/p} f(\bar{z}) - f(x) \geq 2^{1/p} f(\bar{z}) - M.
\]

Letting $m := 2^{1/p} f(\bar{z}) - M$, we obtain
\[
f(y) \geq m \quad \text{for all } y \in \mathbb{B}(\bar{x};\delta).
\]

Hence, $f$ is lower bounded by $m$, and combining this with the upper bound $M$ shows that $f$ is bounded on $\mathbb{B}(\bar{x};\delta)$.
\end{proof}

\begin{remark}
In Theorem \ref{bounded function}, one may replace the open ball $\mathbb{B}(\bar{x};\delta)$ with the closed ball $\mathbb{B}[\bar{x};\delta]$ without affecting the validity of the conclusion.
\end{remark}

 \begin{corollary}
Let $f:[a,b] \rightarrow \mathbb{R}$ be a $p$-convex function, and let $[a,b]$ be a $p$-convex interval with $0 < p \leq 1$. If $M = \max\{f(a),f(b)\} > 0$, then $f$ is a bounded function.
\end{corollary}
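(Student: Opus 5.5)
The plan is to obtain an upper bound on $f$ over $[a,b]$ directly from the $p$-convexity inequality \eqref{Convex function}, and then get a lower bound by the symmetric-point trick used in Theorem~\ref{bounded function}.

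\textbf{Upper bound.} Since $[a,b]$ is a $p$-convex interval, every $x\in[a,b]$ must be written as $\lambda a+\mu b$ with $\lambda,\mu\ge 0$ and $\lambda^p+\mu^p=1$. I do not expect this to hold for all $x$: the $p$-convex combinations of $a$ and $b$ form only a curve, which for $p<1$ need not cover $[a,b]$. So the upper bound cannot come from the endpoints alone. Instead I would use the structure forced on $[a,b]$. It must contain $0$: taking $x=y$ and $\lambda=\mu=2^{-1/p}$ gives $2^{1-1/p}x\in[a,b]$, and iterating this (when $p<1$) produces points tending to $0$, so $0\in[a,b]$ by closedness. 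Hence $a\le 0\le b$.

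For $x\in[0,b]$, write $x=\lambda b+\mu\cdot 0$ with $\lambda=x/b\in[0,1]$ and $\mu=(1-\lambda^p)^{1/p}$. Then \eqref{Convex function} gives
\[
f(x)\le \lambda f(b)+\mu f(0)\le M+\max\{f(0),0\}.
\]
By Theorem~\ref{existence_scalar_minimum} (case $p<1$) we have $f(0)\le 0$, so $f(x)\le M$. The same argument applied with $a$ covers $[a,0]$, where $M>0$ absorbs the sign issues. For $p=1$ this is ordinary convexity, and $f\le\max\{f(a),f(b)\}=M$ directly.

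\textbf{Lower bound.} This is the main obstacle. Theorem~\ref{bounded function} is stated for balls, but in $\mathbb{R}$ the interval $[a,b]$ is the closed ball $\mathbb{B}[\frac{a+b}{2},\frac{b-a}{2}]$, so the Remark following that theorem applies. The delicate point is that its proof requires the reflection $2\bar x-y$ and the point $\bar z=2^{1-1/p}\bar x$ to stay in the set. Here $\bar z=2^{1-1/p}\frac{a+b}{2}$ lies between $0$ and the midpoint, hence in $[a,b]$ because $a\le 0\le b$, and the reflection $a+b-y$ stays in $[a,b]$. With $f\le M$ everywhere, the theorem then gives $f(y)\ge 2^{1/p}f(\bar z)-M$, so $f$ is bounded. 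The step I would check most carefully is the verification that $0\in[a,b]$, together with the handling of $f(0)$ when $p=1$.
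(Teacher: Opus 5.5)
Your argument is correct. Its lower-bound half is the paper's: both apply Theorem~\ref{bounded function} in its closed-ball form to $[a,b]=\mathbb{B}[\frac{a+b}{2},\frac{b-a}{2}]$. Your separate check that $\bar z\in[a,b]$ is harmless, but it is already contained in that proof, since $\bar z=2^{-1/p}x+2^{-1/p}y$ with $x,y$ in the $p$-convex ball.

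The upper bound is where you diverge, and the reason you give is mistaken. In $\mathbb{R}$ the map $g(\lambda)=\lambda a+(1-\lambda^{p})^{1/p}b$ is continuous on $[0,1]$ with $g(0)=b$ and $g(1)=a$. By the intermediate value theorem, every $z\in[a,b]$ is therefore a $p$-convex combination of the endpoints. The ``curve leaves the segment'' phenomenon only appears in dimension $n\ge 2$. The paper uses exactly this, obtaining $f(z)\le\lambda^{*}f(a)+\mu^{*}f(b)\le(\lambda^{*}+\mu^{*})M\le M$ from $M>0$ and $\lambda+\mu\le\lambda^{p}+\mu^{p}=1$. That argument is uniform in $0<p\le 1$ and needs neither $0\in[a,b]$ nor any information about $f(0)$.

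Your detour through the origin is still valid:
\begin{itemize}
\item For $p<1$, the iterates $2^{k(1-1/p)}x\to 0$ together with closedness give $a\le 0\le b$.
\item The combinations $\lambda b+\mu\cdot 0$ are linear in $\lambda$, so no continuity argument is needed.
\item The bound $M+\max\{f(0),0\}$ is already finite, so Theorem~\ref{existence_scalar_minimum} only sharpens the constant to $M$. Its conclusion $f(0)\le 0$ is right because $2^{1-1/p}<1$.
\end{itemize}
What your route buys is the structural fact that a closed $p$-convex interval with $p<1$ must contain $0$, a partial converse to the paper's proposition on intervals. What it costs is the separate $p=1$ case and the extra appeal to $f(0)\le 0$.
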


\begin{proof}
Let $z \in (a,b)$ and define $g(\lambda) = \lambda a + (1-\lambda^{p})^{\frac{1}{p}} b$ for $\lambda \in [0,1]$. By the intermediate value theorem, there exists $\lambda^{\ast} \in (0,1)$ such that $g(\lambda^{\ast}) = z$, since $g(0) = a < z < b = g(1)$. Hence, 
\[
f(z) = f(g(\lambda^{\ast})) \leq \lambda^{\ast} f(a) + (1-(\lambda^{\ast})^{p})^{\frac{1}{p}} f(b) \leq (\lambda^{\ast} + (1-(\lambda^{\ast})^{p})^{\frac{1}{p}}) M \leq M.
\]
This proves that $f$ is upper bounded. The proof of the lower bound of $f$ follows similarly, using Theorem \ref{bounded function}. This completes the proof.
\end{proof}

\begin{theorem}
Let $K \subseteq \mathbb{R}^{n}$ be a $p$-convex set with $\mathrm{int}\, K \neq \emptyset$, and let
$f : K \rightarrow \mathbb{R}_{+}$ be a $p$-convex function with $0 < p < 1$.
If $f$ attains a global strict maximum at some point in $\mathrm{int}\, K$, then $f$ is constant.
\end{theorem}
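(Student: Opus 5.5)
The plan is to show that the maximum value must be $0$. Combined with $f\ge 0$, this forces $f\equiv 0$, so $f$ is constant. Let $x_0\in\mathrm{int}\,K$ be the point where $f$ attains its strict global maximum, so $f(x)<f(x_0)$ for all $x\in K$ with $x\neq x_0$. Since $f\ge0$, we have $f(x_0)\ge0$. If $f(x_0)=0$, then $0\le f\le f(x_0)=0$ on $K$ and we are done. So it suffices to rule out $f(x_0)>0$, and I split into the cases $x_0=0$ and $x_0\neq0$.

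\textbf{Case $x_0=0$.} Here $0\in K$, so Theorem~\ref{existence_scalar_minimum} gives $f(0)\le 0$, hence $f(x_0)=0$, and the previous paragraph applies.

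\textbf{Case $x_0\neq 0$.} The idea is to write $x_0$ as a nontrivial $p$-convex combination of $x_0$ itself and a nearby point $u\neq x_0$ of $K$. For $\lambda\in(0,1)$ put $\mu=(1-\lambda^p)^{1/p}$. Since $0<p<1$ and $\lambda\in(0,1)$, we have $\lambda+\mu<1$. Define
\[
w_\lambda:=\frac{1-\lambda-\mu}{\lambda}\,x_0,\qquad u_\lambda:=x_0+w_\lambda,
\]
so that $\lambda u_\lambda+\mu x_0=(\lambda+\mu)x_0+(1-\lambda-\mu)x_0=x_0$. As $\lambda\to1^-$ we have $\mu\to0$ and $1-\lambda-\mu\to0$, so $w_\lambda\to0$. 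Take $\delta>0$ with $\mathbb{B}_q(x_0,\delta)\subseteq K$ and choose $\lambda$ close to $1$ so that $u_\lambda\in\mathbb{B}_q(x_0,\delta)\subseteq K$. Because $x_0\neq0$ and $1-\lambda-\mu>0$, we have $w_\lambda\neq0$, and therefore $u_\lambda\neq x_0$.

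Applying the $p$-convexity inequality \eqref{Convex function} with $\lambda^p+\mu^p=1$, and then strictness of the maximum together with $\lambda>0$, gives
\[
f(x_0)=f(\lambda u_\lambda+\mu x_0)\le \lambda f(u_\lambda)+\mu f(x_0)<(\lambda+\mu)f(x_0).
\]
Since $\lambda+\mu<1$ and $f(x_0)>0$, the right-hand side is less than $f(x_0)$. This yields $f(x_0)<f(x_0)$, a contradiction, so $f(x_0)=0$ and hence $f\equiv 0$.

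The main obstacle is producing, inside $K$, a representation of $x_0$ as a $p$-convex combination involving a point other than $x_0$. The naive symmetric choice $\lambda=\mu=2^{-1/p}$ does not keep the points near $x_0$. The construction above fixes this by letting $\lambda\to1$, which uses both the interiority of $x_0$ and the strict inequality $\lambda+\mu<1$ valid for $p<1$. The case $x_0=0$ has to be handled separately via Theorem~\ref{existence_scalar_minimum}, because there $w_\lambda=0$ and the construction degenerates.
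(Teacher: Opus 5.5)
Your proof is correct. It uses the same mechanism as the paper: write the interior maximizer as a nontrivial $p$-convex combination of points of $K$, then use $\lambda+\mu<1$ for $0<p<1$. The decomposition, however, is different.

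The paper fixes an arbitrary $x\in K\setminus\{\bar x\}$, extrapolates to $z=(1+\delta^*)^{1/p}\bar x-(\delta^*)^{1/p}x\in K$, and writes $\bar x=(1+\delta^*)^{-1/p}z+\bigl(\delta^*/(1+\delta^*)\bigr)^{1/p}x$. Because the second point is an arbitrary other point of $K$, this handles $\bar x=0$ and $\bar x\neq 0$ uniformly and needs no appeal to Theorem~\ref{existence_scalar_minimum}. Your representation $x_0=\lambda u_\lambda+\mu x_0$ is a purely radial rescaling. It is local, but it degenerates at $x_0=0$, which is why you need the case split. In return, you check $u_\lambda\neq x_0$ explicitly. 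The paper's ``clearly $z\neq\bar x$'' is not automatic: for $\bar x\neq0$ it fails when $x$ is a suitable multiple of $\bar x$. This is harmless there, since strictness at $x$ alone already makes the final inequality strict.

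Your citation of Theorem~\ref{existence_scalar_minimum} is sound, because its statement is true. Be aware, though, that the paper's printed proof of it has the direction wrong. For $0<p<1$ one has $2^{1-1/p}<1$, and it is the positive factor in $(1-2^{1-1/p})f(0)\le 0$ that gives $f(0)\le 0$.

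Finally, you never actually need strictness. Using only $f(u_\lambda)\le f(x_0)$ you still get $f(x_0)\le(\lambda+\mu)f(x_0)$, hence $f(x_0)\le 0$. Then $u_\lambda=x_0$ becomes admissible, so the case split disappears as well. This proves the informative statement: a nonnegative $p$-convex function attaining its global maximum at an interior point vanishes identically.

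The theorem as written, by contrast, is vacuous. A strict maximum on a set with more than one point is incompatible with $f$ being constant. So both proofs really show that the hypothesis can never be satisfied.
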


\begin{proof}
Assume, by contradiction, that $f$ attains a global strict maximum at some point
$\bar{x} \in \mathrm{int}\, K$.
Let $x \in K$ be arbitrary with $x \neq \bar{x}$, and define the mapping
$g : [0,+\infty) \rightarrow \mathbb{R}^{n}$ by
\[
g(\lambda) = (1+\lambda)^{\frac{1}{p}} \bar{x} - \lambda^{\frac{1}{p}} x .
\]
Since $\bar{x} \in \mathrm{int}\, K$, there exists $\delta > 0$ such that
$g([0,\delta)) \subseteq K$. Let $\delta^{\ast} = \delta/2$ and set
\[
z = g(\delta^{\ast})
= (1+\delta^{\ast})^{\frac{1}{p}} \bar{x}
- (\delta^{\ast})^{\frac{1}{p}} x \in K.
\]
Clearly, $z \neq \bar{x}$. Hence, by the strict maximality of $\bar{x}$,
\[
f(z) < f(\bar{x})
\quad \text{and} \quad
f(x) < f(\bar{x}).
\]

Moreover,
\[
\bar{x}
= \frac{1}{(1+\delta^{\ast})^{\frac{1}{p}}} z
+ \left( \frac{\delta^{\ast}}{1+\delta^{\ast}} \right)^{\frac{1}{p}} x.
\]
By the $p$-convexity of $f$, we obtain
\[
f(\bar{x})
\le
\frac{1}{(1+\delta^{\ast})^{\frac{1}{p}}} f(z)
+ \left( \frac{\delta^{\ast}}{1+\delta^{\ast}} \right)^{\frac{1}{p}} f(x)
<
\left(
\frac{1}{(1+\delta^{\ast})^{\frac{1}{p}}}
+
\left( \frac{\delta^{\ast}}{1+\delta^{\ast}} \right)^{\frac{1}{p}}
\right) f(\bar{x}).
\]
Since $0 < p < 1$, the coefficient in parentheses is strictly smaller than~$1$,
which yields a contradiction. Therefore, $f$ must be constant.
\end{proof}

\section{Applications}
Discuss optimization implications.
In this section, we examine the multi-objective optimization problem under the assumption of p-convexity.\\
 Let a vector function $F=(f_{1},\ldots,f_{m}): K\subseteq \R^{n} \rightarrow \R^{m}$, we say that $\bar{x}\in K$ is a {\bf weakly efficient} point of $F$ (on $K$) if
\begin{equation}
F(x)-F(\bar{x})\in \mathbb{R}^m\backslash(-\mathrm{int}\,\mathbb{R}^m_+),\, \forall x\in K.
\label{VP1}
\end{equation}
 We say that $\bar{x}\in K$ is a {\bf weakly efficient} point of $F$ (on $K$), the set of weakly efficient points is denoted by $E_{W}=E_{W}(K)$.
\begin{remark}
It is evident from the definition that:
$$\displaystyle \bigcup_{i=1}^{m}\mathrm{argmin}{f_{i}}\subseteq E_{W}$$
\label{observacion inclusion}
\end{remark}

\begin{proposition}[\cite{FV2}] Let $F=(f_{1},\ldots,f_{m}):K\rightarrow \R^{m}$ such that $\displaystyle \bigcap_{i=1}^{m}\mathrm{argmin}{f_{i}}\neq \emptyset$. Then $\displaystyle E_{W}=\bigcup_{i=1}^{m}\mathrm{argmin}{f_{i}}$. 
\label{igualdad Ew}
\end{proposition}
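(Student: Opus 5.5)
The equality $E_W=\bigcup_{i=1}^{m}\mathrm{argmin} f_i$ will be proved by double inclusion. Remark~\ref{observacion inclusion} already gives $\bigcup_{i=1}^{m}\mathrm{argmin} f_i\subseteq E_W$: if $\bar x$ minimizes some $f_{i_0}$, then $f_{i_0}(x)-f_{i_0}(\bar x)\ge 0$ for all $x\in K$, so $F(x)-F(\bar x)\notin -\mathrm{int}\,\mathbb{R}^m_+$. It therefore remains to prove $E_W\subseteq \bigcup_{i=1}^{m}\mathrm{argmin} f_i$, and this is where the hypothesis $\bigcap_{i=1}^{m}\mathrm{argmin} f_i\neq\emptyset$ is used.

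Fix $x^\ast\in\bigcap_{i=1}^{m}\mathrm{argmin} f_i$, so $f_i(x^\ast)=\min_K f_i$ for every $i$. Let $\bar x\in E_W$ and apply the defining condition (\ref{VP1}) at the single point $x=x^\ast$. This gives $F(x^\ast)-F(\bar x)\notin -\mathrm{int}\,\mathbb{R}^m_+$. Since $\mathrm{int}\,\mathbb{R}^m_+$ is the set of vectors with all coordinates strictly positive, some index $i_0$ satisfies $f_{i_0}(x^\ast)-f_{i_0}(\bar x)\ge 0$, that is, $f_{i_0}(\bar x)\le f_{i_0}(x^\ast)$.

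Because $x^\ast$ minimizes $f_{i_0}$, we have $f_{i_0}(x^\ast)\le f_{i_0}(x)$ for all $x\in K$. Combining the two inequalities gives $f_{i_0}(\bar x)\le f_{i_0}(x)$ for all $x\in K$. Hence $\bar x\in\mathrm{argmin} f_{i_0}\subseteq\bigcup_{i=1}^{m}\mathrm{argmin} f_i$, which completes the reverse inclusion.

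No $p$-convexity is needed; the argument is purely order-theoretic. The only step requiring care is unpacking the complement of $-\mathrm{int}\,\mathbb{R}^m_+$ correctly: the condition means that \emph{not all} coordinates of $F(x^\ast)-F(\bar x)$ are negative, so at least one is nonnegative. It is also essential to test weak efficiency at the common minimizer $x^\ast$ itself rather than at an arbitrary point, since that is what transfers the minimality of $x^\ast$ to $\bar x$.
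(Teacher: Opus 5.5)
Your proof is correct and complete. The paper does not prove this proposition itself (it is quoted from the literature), and your double inclusion is the standard direct argument: the remark gives $\bigcup_i \mathrm{argmin} f_i\subseteq E_W$, and for the converse you test the weak efficiency of $\bar x$ at a common minimizer $x^\ast$ to obtain an index $i_0$ with $f_{i_0}(\bar x)\le f_{i_0}(x^\ast)=\min_K f_{i_0}$. As you note, no $p$-convexity is needed, which matches the proposition being stated for arbitrary $K$ and $F$.
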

\begin{definition}
Let $K \subseteq \mathbb{R}^n$ be a $p$--convex set and let $F:K \to \mathbb{R}^m$.
The mapping $F$ is said to be \emph{$\mathbb{R}^m_{+}$--$p$--convex}, with $0<p\le 1$,
if
\[
\lambda F(x) + \mu F(y)
\in
F(\lambda x + \mu y) + \mathbb{R}^m_{+},
\]
for all $x,y \in K$ and all $\lambda,\mu \ge 0$ satisfying
$\lambda^{p} + \mu^{p} = 1$.
\end{definition}
\begin{remark}
Let $F=(f_1,\ldots,f_m):K\to\mathbb{R}^m$.
Then $F$ is $\mathbb{R}^m_{+}$--$p$--convex if and only if each component
$f_i:K\to\mathbb{R}$ is $p$--convex, for every $i\in I:=\{1,\ldots,m\}$.
\end{remark}
\begin{corollary}
Let $K \subseteq \mathbb{R}^n$ be a $p$--convex set with $0\in K$, and let
$F=(f_1,\ldots,f_m):K\to\mathbb{R}^m_{+}$.
If there exists $i_0\in I$ such that $f_{i_0}$ is $p$--convex, with $0<p<1$,
then $0$ is a weakly efficient point of $F$ on $K$, that is,
$0\in E_W(K)$.
\label{0 en Ew}
\end{corollary}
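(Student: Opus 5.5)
The plan is to combine Theorem~\ref{existence_scalar_minimum} with the non-negativity of $F$. That theorem gives $f_{i_0}(0)\le 0$. Since $F$ takes values in $\mathbb{R}^m_{+}$, we also have $f_{i_0}(x)\ge 0$ for every $x\in K$, in particular $f_{i_0}(0)\ge 0$. Together these give $f_{i_0}(0)=0\le f_{i_0}(x)$ for all $x\in K$, so $0\in\mathrm{argmin}\, f_{i_0}$.

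It then remains to apply Remark~\ref{observacion inclusion}, which gives $\mathrm{argmin}\, f_{i_0}\subseteq \bigcup_{i=1}^{m}\mathrm{argmin}\, f_i\subseteq E_W(K)$, and hence $0\in E_W(K)$. For completeness one can verify this directly from (\ref{VP1}). For any $x\in K$, the $i_0$-th component of $F(x)-F(0)$ equals $f_{i_0}(x)-f_{i_0}(0)=f_{i_0}(x)\ge 0$. Hence $F(x)-F(0)\notin -\mathrm{int}\,\mathbb{R}^m_{+}$, since membership there would require every component to be strictly negative.

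The only step needing care is the application of Theorem~\ref{existence_scalar_minimum}. Its hypotheses are that $K$ is $p$-convex, that $0\in K$, and that $f_{i_0}$ is $p$-convex with $0<p<1$. These are exactly what the corollary assumes, with $p$-convexity of the single component $f_{i_0}$ taken in the sense of (\ref{Convex function}) on $K$. Nothing is required of the other components beyond non-negativity, and in fact not even that, since only $f_{i_0}\ge 0$ is used. I do not expect any genuine obstacle; the argument is a direct combination of the two cited results.
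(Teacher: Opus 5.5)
Your proof is correct and follows the paper's route: Theorem~\ref{existence_scalar_minimum} gives $f_{i_0}(0)\le 0$, so $0\in\mathrm{argmin}\, f_{i_0}$, and Remark~\ref{observacion inclusion} then gives $0\in E_W(K)$. You also make explicit the step the paper leaves implicit: nonnegativity $f_{i_0}\ge 0$ on $K$ (since $F$ is $\mathbb{R}^m_{+}$-valued) is what turns $f_{i_0}(0)\le 0$ into $f_{i_0}(0)=0=\min_K f_{i_0}$.
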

\begin{proof}
Since $f_{i_0}$ is $p$--convex and $0\in K$, Theorem~\ref{existence_scalar_minimum}
ensures that $f_{i_0}$ attains its global minimum at $0$.
The conclusion then follows directly from
Remark~\ref{observacion inclusion}.
\end{proof}

\begin{corollary} Let $K\subseteq \R^{n}$ be a p-convex set, and let $F=(f_{1},\ldots,f_{m}):K\rightarrow \R^{m}_{+}$ be  a  $\R^{m}_{+}$ p-convex function, with $0<p<1$. Then $\displaystyle E_{W}=\bigcup_{i=1}^{m}\mathrm{argmin}{f_{i}}$.
\end{corollary}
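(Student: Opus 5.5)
The plan is to prove the two inclusions directly. The inclusion $\bigcup_{i=1}^{m}\mathrm{argmin}\, f_i\subseteq E_W$ is Remark~\ref{observacion inclusion}. For the reverse inclusion I will use the self-scaling available in a $p$-convex setting when $0<p<1$, rather than Proposition~\ref{igualdad Ew}. That proposition would need $\bigcap_i \mathrm{argmin}\, f_i\neq\emptyset$, which we could only guarantee (via Theorem~\ref{existence_scalar_minimum} and $f_i\ge 0$, giving $f_i(0)=0$) if $0\in K$. That is not assumed: for instance $(0,1]$ is $p$-convex and does not contain $0$.

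\emph{Key construction.} Take $\lambda=\mu=2^{-1/p}$, so that $\lambda^p+\mu^p=1$, and set $c:=\lambda+\mu=2^{1-1/p}$. Since $0<p<1$, we have $0<c<1$. For any $x\in K$, $p$-convexity of $K$ with $y=x$ gives $cx=\lambda x+\mu x\in K$. The $p$-convexity of each $f_i$ (equivalently, the $\mathbb{R}^m_+$--$p$--convexity of $F$) gives
\[
f_i(cx)\le \lambda f_i(x)+\mu f_i(x)=c\, f_i(x),\qquad i=1,\ldots,m .
\]

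\emph{Main step.} Let $\bar x\in E_W$. There are two cases.

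If $f_{i_0}(\bar x)=0$ for some $i_0$, then $\bar x\in\mathrm{argmin}\, f_{i_0}$, because $f_{i_0}$ takes values in $\mathbb{R}_+$.

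Otherwise $f_i(\bar x)>0$ for every $i$. The point $x:=c\bar x$ lies in $K$ and satisfies
\[
f_i(x)\le c f_i(\bar x)<f_i(\bar x)\quad\text{for all } i,
\]
so $F(x)-F(\bar x)\in -\mathrm{int}\,\mathbb{R}^m_+$. This contradicts~\eqref{VP1}. Hence every $\bar x\in E_W$ lies in some $\mathrm{argmin}\, f_i$.

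As a by-product, iterating the construction gives $f_i(c^k x)\le c^k f_i(x)\to 0$. So $\inf_K f_i=0$ for each $i$, and $\mathrm{argmin}\, f_i$ is exactly the zero set of $f_i$; this is consistent with the case split above.

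The only real subtlety, and the step I would check carefully, is that the argument never needs $0\in K$ or the attainment of minima. It needs only the strict inequality $c<1$, which is exactly where the hypothesis $0<p<1$ is used, together with the nonnegativity of $F$, which turns "some $f_i(\bar x)=0$" into membership in an argmin. If $0\in K$ happens to hold, Corollary~\ref{0 en Ew} and Proposition~\ref{igualdad Ew} give an alternative route.
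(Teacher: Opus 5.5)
Your proof is correct, and it takes a different route from the paper's. The paper's proof is a one-line appeal to earlier results: Theorem~\ref{existence_scalar_minimum} gives $f_i(0)\le 0$, nonnegativity of $F$ then forces $f_i(0)=0$, so $0\in\bigcap_{i=1}^{m}\mathrm{argmin}\, f_i$, and Proposition~\ref{igualdad Ew} yields the equality. As you point out, this tacitly uses $0\in K$, which the statement does not assume. On the $p$-convex set $K=(0,1]$ with, say, $f_1(x)=x$, the argmin is empty and Proposition~\ref{igualdad Ew} cannot be invoked. You instead apply the scaling $\bar x\mapsto 2^{1-1/p}\bar x$ at the candidate point $\bar x$ itself; this is the same device that proves Theorem~\ref{existence_scalar_minimum}, but used at $\bar x$ rather than at $0$. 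It produces a point that strictly improves every component whenever all $f_i(\bar x)>0$. So you need neither $0\in K$ nor attainment of any minimum, and you also get the sharper description $\mathrm{argmin}\, f_i=\{x\in K: f_i(x)=0\}$ from $\inf_K f_i=0$. The paper's route is shorter because it reuses earlier results, and when $0\in K$ does hold it additionally exhibits $0$ as a common minimizer lying in $E_W$ (Corollary~\ref{0 en Ew}). Your argument, however, proves the corollary exactly as stated.
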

\begin{proof}
The proof follows from the application of theorem \ref{existence_scalar_minimum} and proposition \ref{igualdad Ew}.
\end{proof}

\begin{theorem}
Let $K \subseteq \mathbb{R}^n$ be a $p$--convex set, and let
$F=(f_1,\ldots,f_m):K \to \mathbb{R}^m_{+}$ be an
$\mathbb{R}^m_{+}$--$p$--convex mapping, with $0<p\le 1$.
Then, for every $\bar{x}\in E_W(K)$ and for all $\lambda,\mu \ge 0$
satisfying $\lambda^{p}+\mu^{p}=1$, the point
\[
(\lambda+\mu)\bar{x}
\]
also belongs to $E_W(K)$.
\label{mv}
\end{theorem}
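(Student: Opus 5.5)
The plan is to show directly that $F((\lambda+\mu)\bar{x})\le F(\bar{x})$ componentwise. Weak efficiency then transfers from $\bar{x}$ to $(\lambda+\mu)\bar{x}$, because any point strictly dominating $(\lambda+\mu)\bar{x}$ would also strictly dominate $\bar{x}$.

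\emph{Step 1: feasibility.} Fix $\bar{x}\in E_W(K)$ and $\lambda,\mu\ge 0$ with $\lambda^p+\mu^p=1$, and set $t:=\lambda+\mu$. Since $K$ is $p$-convex and $\bar{x}\in K$, taking $x=y=\bar{x}$ in the definition gives $t\bar{x}=\lambda\bar{x}+\mu\bar{x}\in K$. Moreover, as already used in the proof of Proposition~\ref{ejemplo bola centra origen}, $0<p\le 1$ forces $\lambda,\mu\in[0,1]$, hence $\lambda\le\lambda^p$ and $\mu\le\mu^p$. This gives $0<t\le 1$; positivity holds because $\lambda,\mu$ cannot both vanish.

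\emph{Step 2: comparison of values.} By the $\mathbb{R}^m_+$--$p$--convexity of $F$ (equivalently, $p$-convexity of each $f_i$ by the preceding remark), applied with $x=y=\bar{x}$, we get
\[
F(t\bar{x})\le \lambda F(\bar{x})+\mu F(\bar{x}) = tF(\bar{x})\quad\text{componentwise}.
\]
Since $F$ takes values in $\mathbb{R}^m_+$ and $t\le 1$, we have $tF(\bar{x})\le F(\bar{x})$. Hence $f_i(t\bar{x})\le f_i(\bar{x})$ for every $i\in I$.

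\emph{Step 3: conclusion by contradiction.} Suppose $t\bar{x}\notin E_W(K)$. Then there is $x\in K$ with $F(x)-F(t\bar{x})\in -\mathrm{int}\,\mathbb{R}^m_+$, that is, $f_i(x)<f_i(t\bar{x})$ for all $i$. Combined with Step 2, this gives $f_i(x)<f_i(\bar{x})$ for all $i$. So $F(x)-F(\bar{x})\in-\mathrm{int}\,\mathbb{R}^m_+$, contradicting (\ref{VP1}) for $\bar{x}$. Therefore $(\lambda+\mu)\bar{x}\in E_W(K)$.

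The main obstacle is Step 2. It is the only place where both the nonnegativity of $F$ and the bound $\lambda+\mu\le 1$ are needed; without $F\ge 0$, the factor $t\le 1$ could increase a negative component and the argument would fail. Everything else is a direct unpacking of the definitions.
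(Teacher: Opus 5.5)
Your proof is correct and follows the paper's argument. Both derive $F((\lambda+\mu)\bar{x}) \le (\lambda+\mu)F(\bar{x}) \le F(\bar{x})$ componentwise from $\mathbb{R}^m_+$--$p$--convexity and then transfer weak efficiency from $\bar{x}$ (the paper by adding a vector of $\mathbb{R}^m_+$ to one outside $-\mathrm{int}\,\mathbb{R}^m_+$, you by the equivalent contradiction), and you additionally make explicit what the paper's ``Consequently'' leaves implicit: that $(\lambda+\mu)\bar{x}\in K$, and that the step needs both $F\ge 0$ and $\lambda+\mu\le 1$.
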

\begin{proof}
Let $\bar{x}\in E_W(K)$ and let $\lambda,\mu \ge 0$ be such that
$\lambda^{p}+\mu^{p}=1$.
Since $\lambda+\mu>0$, the $\mathbb{R}^m_{+}$--$p$--convexity of $F$ yields
\[
(\lambda+\mu)F(\bar{x}) - F\big((\lambda+\mu)\bar{x}\big)
\in \mathbb{R}^m_{+}.
\]
Consequently,
\[
F(\bar{x}) - F\big((\lambda+\mu)\bar{x}\big)
\in \mathbb{R}^m_{+}.
\]

On the other hand, since $\bar{x}\in E_W(K)$, for every $y\in K$ we have
\[
F(y)-F(\bar{x}) \in \mathbb{R}^m \setminus
\big(-\operatorname{int}\mathbb{R}^m_{+}\big).
\]
Combining the two relations above, it follows that
\[
F(y)-F\big((\lambda+\mu)\bar{x}\big)
\in \mathbb{R}^m \setminus
\big(-\operatorname{int}\mathbb{R}^m_{+}\big),
\quad \forall y\in K.
\]
Therefore, $(\lambda+\mu)\bar{x} \in E_W(K)$, which completes the proof.
\end{proof}

 \begin{corollary}
Let $K \subseteq \mathbb{R}$ be a $p$--convex set, and let
$F=(f_{1},\ldots,f_{m}):K \to \mathbb{R}^{m}_{+}$ be an
$\mathbb{R}^{m}_{+}$--$p$--convex mapping, with $0<p<1$.
If $\bar{x} \in E_W$ and $\bar{x}>0$, then $(0,\bar{x}] \subseteq E_W$.
Similarly, if $\bar{x} \in E_W$ and $\bar{x}<0$, then
$[\bar{x},0) \subseteq E_W$.
\label{Caract Ew}
\end{corollary}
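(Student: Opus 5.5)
The plan is to apply Theorem~\ref{mv} and then show that the scaling factors $\lambda+\mu$ it produces fill the whole interval $(0,1]$. Fix $\bar{x}\in E_W$ with $\bar{x}>0$. For every pair $\lambda,\mu\ge 0$ with $\lambda^{p}+\mu^{p}=1$, Theorem~\ref{mv} gives $(\lambda+\mu)\bar{x}\in E_W$. It therefore suffices to show that
\[
S:=\{\lambda+\mu : \lambda,\mu\ge 0,\ \lambda^{p}+\mu^{p}=1\}
\]
contains $(0,1]$. Then every point $t\bar{x}$ with $t\in(0,1]$, which is exactly every point of $(0,\bar{x}]$, lies in $E_W$.

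\textbf{Computing $S$.} Parametrize by $\lambda\in[0,1]$ and set $\mu=(1-\lambda^{p})^{1/p}$. The function
\[
\phi(\lambda)=\lambda+(1-\lambda^{p})^{1/p}
\]
is continuous on $[0,1]$, with $\phi(0)=\phi(1)=1$. At $\lambda=2^{-1/p}$ it equals $2^{1-1/p}<1$, since $0<p<1$.

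\textbf{Main obstacle.} By the intermediate value theorem, $S$ is only the interval $[\min\phi,1]$. Since $\phi\ge 2^{1-1/p}>0$ (the minimum is attained at the symmetric point), $S$ does not reach small values of $t$ near $0$. A single application of Theorem~\ref{mv} therefore yields only $[2^{1-1/p}\bar{x},\bar{x}]\subseteq E_W$.

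\textbf{Iteration.} To close the gap, apply Theorem~\ref{mv} repeatedly. A point $s\bar{x}$ with $s\in S$ is again in $E_W$, so the same argument gives $s s'\bar{x}\in E_W$ for $s,s'\in S$. By induction,
\[
[c^{k}\bar{x},\bar{x}]\subseteq E_W \quad\text{for every } k\in\mathbb{N},\qquad c:=2^{1-1/p}\in(0,1).
\]
Indeed, if $[c^{k},1]\cdot\bar{x}\subseteq E_W$, then multiplying by $[c,1]$ gives $[c^{k+1},1]\cdot\bar{x}$. Since $c^{k}\to 0$, the union over $k$ of these intervals is $(0,\bar{x}]$, which proves the first claim. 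The strict inequality $c<1$ is exactly where $p<1$ is used.

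\textbf{The negative case.} If $\bar{x}<0$, the same argument applies verbatim: the points $t\bar{x}$ with $t\in(0,1]$ sweep out $[\bar{x},0)$. Alternatively, one can reflect via $y\mapsto -y$, using Theorem~\ref{Suma p convexa} with $\nu=-1$ to see that $-K$ is $p$-convex and that the reflected map is still $\mathbb{R}^m_+$--$p$--convex.
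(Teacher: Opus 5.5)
Your proposal is correct and follows the paper's proof essentially step for step. You apply Theorem~\ref{mv}, use $\phi(2^{-1/p})=2^{1-1/p}=:c<1$ to get $[c\,\bar{x},\bar{x}]\subseteq E_W$, and then iterate to $[c^{k}\bar{x},\bar{x}]\subseteq E_W$, whose union is $(0,\bar{x}]$. You are also slightly more explicit than the paper, noting that only $\phi(0)=1$, $\phi(2^{-1/p})=c$ and the intermediate value theorem are needed (not the minimality claim), and spelling out the induction step $[c^{k},1]\cdot[c,1]=[c^{k+1},1]$.
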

\begin{proof}
We only prove the case $\bar{x}>0$, since the case $\bar{x}<0$ follows by symmetry.
Let $\lambda,\mu \ge 0$ be such that $\lambda^{p}+\mu^{p}=1$, so that
$\mu=(1-\lambda^{p})^{1/p}$.
Define the function
\[
g(\lambda)=\lambda+(1-\lambda^{p})^{1/p}, \qquad \lambda \in [0,1].
\]
A direct analysis shows that $g$ attains its minimum at
$\lambda=2^{-1/p}$, and
\[
g\!\left(2^{-1/p}\right)=2^{(p-1)/p}.
\]
By Theorem~\ref{mv}, it follows that
\[
\bigl[2^{(p-1)/p}\,\bar{x},\,\bar{x}\bigr] \subseteq E_W.
\]
Iterating this argument, we obtain
\[
\bigl[2^{n(p-1)/p}\,\bar{x},\,2^{(n-1)(p-1)/p}\,\bar{x}\bigr]
\subseteq E_W
\quad \text{for all } n \in \mathbb{N}.
\]
Consequently,
\[
\bigcup_{n \in \mathbb{N}}
\bigl[2^{n(p-1)/p}\,\bar{x},\,\bar{x}\bigr]
= (0,\bar{x}] \subseteq E_W,
\]
which completes the proof.
\end{proof}

 \begin{corollary}
Let $K \subseteq \mathbb{R}$ be a $p$--convex set, and let
$F=(f_{1},\ldots,f_{m}):K \to \mathbb{R}^{m}_{+}$ be an
$\mathbb{R}^{m}_{+}$--$p$--convex function, with $0<p<1$.
Then the weak efficient solution set $E_W$ is a $p$--convex set.
\end{corollary}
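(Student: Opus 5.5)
The plan is to show that $E_W$ is, in each case, a subinterval of $\mathbb{R}$ containing $0$ (when nonempty). Every such interval is $p$-convex by the Proposition on intervals $a\le 0<b$; a degenerate interval reduces to the singleton $\{0\}$, which is $p$-convex by the Remark on singletons. If $E_W=\emptyset$, the statement holds trivially, so assume $E_W\neq\emptyset$.

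\textbf{Step 1: $E_W$ is star-shaped towards the origin.} For $\bar x\in E_W$ with $\bar x>0$, Corollary~\ref{Caract Ew} gives $(0,\bar x]\subseteq E_W$; symmetrically, if $\bar x<0$ it gives $[\bar x,0)\subseteq E_W$. I then want $0\in E_W$. Since $K$ is $p$-convex and contains $\bar x$, the point $2^{1-1/p}\bar x$ lies in $K$, and iterating gives $2^{n(1-1/p)}\bar x\in K$ for all $n$. If $0\in K$, Corollary~\ref{0 en Ew} applies directly, because each $f_i$ is $p$-convex with $0<p<1$, so $0\in E_W$. The concern is whether $0\in K$ at all. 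If it is not, the argument runs entirely on the punctured set: $E_W\subseteq K\setminus\{0\}$ is still a union of pieces of the form $(0,\bar x]$ or $[\bar x,0)$. Note also that the iteration above only shrinks towards $0$, because $2^{1-1/p}<1$.

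\textbf{Step 2: $E_W$ is an interval.} Set $a=\inf E_W$ and $b=\sup E_W$. By Step~1, every point strictly between $a$ and $b$ lies on the same side of $0$ as some element of $E_W$ farther from $0$, and hence lies in $E_W$. So $E_W$ is an interval with endpoints $a\le 0\le b$, possibly with $0$ removed if $0\notin K$.

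\textbf{Step 3: verify $p$-convexity directly.} Take $x,y\in E_W$ and $\lambda,\mu\ge 0$ with $\lambda^p+\mu^p=1$, so that $\lambda+\mu\le 1$. Then $z=\lambda x+\mu y$ satisfies $\min(x,y,0)\le z\le\max(x,y,0)$, by the same estimate as in the interval Proposition. If $z\neq 0$, it lies in the interval $(0,\max]$ or $[\min,0)$, which is contained in $E_W$ by Step~1. If $z=0$, then $0\in K$ by $p$-convexity of $K$, and Corollary~\ref{0 en Ew} gives $0\in E_W$.

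The main obstacle is the case $z=0$, together with establishing $0\in E_W$ cleanly. The key observation that handles it is that $0$ only has to be shown to belong to $E_W$ when $0$ actually arises as a $p$-combination of points of $K$, and in that case $0\in K$ automatically, so Corollary~\ref{0 en Ew} applies. This closes the argument without any separate existence claim for $0\in K$.
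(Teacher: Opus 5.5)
Your argument is correct, and its substance (Step~3) is the paper's proof: Corollary~\ref{Caract Ew} handles $z=\lambda x+\mu y\neq 0$ via $\min(x,y,0)\le z\le\max(x,y,0)$, which follows from $\lambda+\mu\le 1$, and Corollary~\ref{0 en Ew} (Theorem~\ref{existence_scalar_minimum} plus $F\ge 0$) gives $0\in E_W$. You split on the sign of $z$ rather than on the signs of $x_1,x_2$, which makes $0\in K$ automatic exactly when it is needed (the paper's Case~2 uses $0\in K$ without giving the short intermediate-value justification) and also covers the cases $x_1=0$ or $x_2=0$ that the paper omits; Steps~1--2 can be dropped, and your opening claim that a nonempty $E_W$ contains $0$ is false in general ($K=(0,1]$ with $F\equiv 0$ gives $E_W=(0,1]$).
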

\begin{proof}
Let $x_{1},x_{2} \in E_W$.
Without loss of generality, assume that $x_{1}<x_{2}$ and define
\[
x=\lambda x_{1}+\mu x_{2},
\]
where $\lambda,\mu \ge 0$ satisfy $\lambda^{p}+\mu^{p}=1$.
We consider the following cases.

\smallskip
\noindent
\textbf{Case 1:} $0<x_{1}<x_{2}$.
In this case, $0<x<x_{2}$.
By Corollary~\ref{Caract Ew}, we obtain
\[
x \in (0,x_{2}] \subseteq E_W.
\]

\smallskip
\noindent
\textbf{Case 2:} $x_{1}<0<x_{2}$.
By Theorem~\ref{existence_scalar_minimum} and Corollary~\ref{Caract Ew},
it follows that
\[
x \in [x_{1},x_{2}] \subseteq E_W.
\]

\smallskip
\noindent
\textbf{Case 3:} $x_{1}<x_{2}<0$.
This case is completely analogous to Case~1 and is therefore omitted.

\smallskip
In all cases, $x \in E_W$, which proves that $E_W$ is $p$--convex.
\end{proof}

\section{Conclusion}

In this work, we have presented a rigorous analysis of \emph{p-convexity} for sets and functions in Euclidean spaces, providing both algebraic and topological results that generalize classical convexity. The main contributions can be summarized as follows:

\begin{enumerate}
    \item \textbf{Fundamental properties of p-convex sets:} Precise criteria were established to characterize p-convex sets, including conditions on p-convex traces and the preservation of p-convexity under specific linear operations.
    
    \item \textbf{p-Convex functions:} We showed that p-convex functions retain key properties of classical convex functions, such as continuity in the interior of the domain under local boundedness, and provided explicit examples illustrating their behavior.
    
    \item \textbf{Relation to classical convexity:} Cases were presented where p-convexity reduces to standard convexity for $p=1$, demonstrating how the developed theory naturally extends classical concepts.
    
    \item \textbf{Applications to multi-objective optimization:} The theory of p-convexity offers a more flexible framework for studying multi-objective and nonlinear optimization problems, allowing a more general characterization of feasible sets and objective functions.
\end{enumerate}

Overall, the results provide a solid foundation for future developments in functional analysis and optimization, opening the door to problems where classical convexity is too restrictive and p-convexity offers a more general and robust alternative.
%

\end{document}